\newenvironment{changemargin}[2]{\begin{list}{}{%
\setlength{\topsep}{0pt}%
\setlength{\leftmargin}{0pt}%
\setlength{\rightmargin}{0pt}%
\setlength{\listparindent}{\parindent}%
\setlength{\itemindent}{\parindent}%
\setlength{\parsep}{0pt plus 1pt}%
\addtolength{\leftmargin}{#1}%
\addtolength{\rightmargin}{#2}%
}\item }{\end{list}}
\newtheorem{theorem}{Theorem}[section]
\newtheorem{proposition}{Proposition}[section]
\newtheorem{corollary}{Corollary}[section]
\newtheorem{definition}{Definition}[section]
\newtheorem{lemma}{Lemma}[section]
\theoremstyle{remark}
\newtheorem{remark}{Remark}
\newcommand{\fleche}[4]{                     
            \begin{array}{rcl} #1 & \rightarrow & #2 \\   %
                         #3 &\mapsto & #4          %
            \end{array}}
\newcommand{\foncIso}[5]{                     
            \begin{array}{crll}#1 :& #2 & \overset{\sim}{\rightarrow} & #3 \\   %
                         &#4 &\mapsto & #5          %
            \end{array}}
\let\svthefootnote\thefootnote
\DeclareMathOperator{\End}{End}
\DeclareMathOperator{\SLF}{SLF}
\DeclareMathOperator{\Hom}{Hom}
\DeclareMathOperator{\vect}{vect}
\DeclareMathOperator{\Ob}{Ob}
\DeclareMathOperator{\Proj}{Proj}
\DeclareMathOperator{\Top}{Top}
\DeclareMathOperator{\Soc}{Soc}
\title{\bf A note on symmetric linear forms and traces \\on the restricted quantum group $\bar U_q(\mathfrak{sl}(2))$}
\author{Matthieu \textsc{Faitg}}
\date{}
\begin{document}

\maketitle

\vspace{2em}
\begin{changemargin}{1.5cm}{1.5cm}
{\small
\noindent \textsc{Abstract}. \hspace{2pt} In this paper we prove two results about $\SLF(\bar U_q)$, the algebra of symmetric linear forms on the restricted quantum group $\bar U_q = \bar U_q\left(\mathfrak{sl}(2)\right)$. First, we express any trace on finite dimensional projective $\bar U_q$-modules as a linear combination in the basis of $\SLF(\bar U_q)$ constructed by Gainutdinov - Tipunin and also by Arike. In particular, this allows us to determine the symmetric linear form corresponding to the modified trace on projective $\bar U_q$-modules. Second, we give the explicit multiplication rules between symmetric linear forms in this basis.
}
\end{changemargin}
\vspace{0.5em}
\section{Introduction}

\indent Let 
\let\thefootnote\relax\footnote{2010 Mathematics Subject Classification. Primary 16T20 ; Secondary 17B37, 16T05.}
\addtocounter{footnote}{-1}\let\thefootnote\svthefootnote 
$\!\! \bar U_q = \bar U_q(\mathfrak{sl}(2))$ be the restricted quantum group associated to $\mathfrak{sl}(2)$ and $\SLF(\bar U_q)$ its space of {\em symmetric linear forms}, which is naturally endowed with an algebra structure. In \cite{GT} and \cite{arike}, an interesting basis of $\SLF(\bar U_q)$ is introduced, that will be called the GTA basis in the sequel, and whose construction is based on the simple and the projective $\bar U_q$-modules (see section \ref{sectionSLF}). In this paper, we prove two results about this basis, namely the relation with traces on projectives modules, and the formulas for multiplication of symmetric linear forms.

\smallskip

\indent First, we show in the general setting of a finite dimensional $k$-algebra $A$ that there is a correspondence between traces on finite dimensional projective $A$-modules and symmetric linear forms on $A$ (Theorem \ref{corTracesSLF}). In the case of $A = \bar U_q$, the natural question is to express the image of a trace through this correspondence in the GTA basis. We answer this question and show that this basis is relevant with regard to this correspondence in Theorem \ref{MainResult}. The modified trace computed in \cite{BBGe} is an interesting example of a trace on projective $\bar U_q$-modules. We determine the symmetric linear form corresponding to the modified trace, and get that it is $\mu(K^{p+1}\cdot)$, where $\mu$ is a suitably normalized right integral of $\bar U_q$ (see section \ref{ModTrace}). This last result has been found simultaneously in \cite{BBG} in a general framework including $\bar U_q$.

\smallskip

\indent With regard to the structure of algebra on $\SLF(\bar U_q)$, a natural and important problem is to determine the multiplication rules of the elements in the GTA basis. In section \ref{multiplication}, we find the decomposition of the product of two basis elements in the GTA basis. The resulting formulas are surprisingly simple (Theorem \ref{ProduitArike}). Note that a similar problem (namely the multiplication in the space of $q$-characters $\mathrm{qCh}(\bar U_q)$, which is isomorphic as an algebra to $\mathrm{SLF}(\bar U_q)$) has been solved in \cite{GT}, but I was not aware of the existence of this paper when preparing this work. It turns out that our proofs are different. In \cite{GT}, they use the fact that the multiplication in the canonical basis of $\mathcal{Z}(\bar U_q)$ is very simple. They first express the image of their basis of $\mathrm{qCh}(\bar U_q)$ through the Radford mapping in the canonical basis of $\mathcal{Z}(\bar U_q)$. This gives a basis of $\mathcal{Z}(\bar U_q)$ called the Radford basis. Then they use the $\mathcal{S}$-transformation of the $\mathrm{SL}_2(\mathbb{Z})$ representation on $\mathcal{Z}(\bar U_q)$ to express the Drinfeld basis (which is the image of their basis of $\mathrm{qCh}(\bar U_q)$ by the Drinfeld map) in the Radford basis. This gives the multiplication rules in the Drinfeld basis. Since the Drinfeld map is an isomorphism of algebras between $\mathrm{qCh}(\bar U_q)$ and $\mathcal{Z}(\bar U_q)$, this gives also the multiplication rules in the GTA basis. Here we directly work in $\mathrm{SLF}(\bar U_q)$. We first prove an elementary lemma which shows that there are not many coefficients to determine, and then we compute these coefficients by using the evaluation on suitable elements of $\bar U_q$.

\smallskip

\indent To make the paper self-contained and fix notations, we recall some facts about the structure of $\bar U_q$ and its representation theory in section \ref{preliminaries}. In section \ref{sectionSLF}, we introduce $\SLF(\bar U_q)$ and the GTA basis. We then state some properties that are needed to prove our results.

\smallskip

\indent In \cite{F}, the GTA basis and its multiplication rules are extensively used to describe in detail the projective representation of $\text{SL}_2(\mathbb{Z})$ (the mapping class group of the torus) on $\SLF(\bar U_q)$ provided by the graph algebra of the torus with the gauge algebra $\bar U_q$ (which is a quantum analogue of the algebra of functions associated to lattice gauge theory on the torus).

\medskip

\noindent {\em Acknowledgments.}\hspace{2pt} I am grateful to my advisors, St\'ephane Baseilhac and Philippe Roche, for their regular support and their useful remarks. I also thank Azat Gainutdinov for several comments about the first version of this paper and the referee for reading the manuscript carefully and for pointing out an insufficient argument in the proof of Theorem \ref{ProduitArike}.

\medskip

\noindent {\em Notations.} \hspace{2pt} If $A$ is a $k$-algebra (with $k$ a field), $V$ is a finite dimensional $A$-module and $x \in A$, we denote by $\overset{V}{x} \in \End(V)$ the representation of $x$ on the module $V$. We will work only with finite dimensional modules and mainly with left modules, thus often we simply write ``module'' instead of ``finite dimensional left module''. The {\em socle} of $V$, denoted by $\Soc(V)$ is the largest semi-simple submodule of $V$. The {\em top} of $V$, denoted by $\Top(V)$, is $V/\text{Rad}(V)$, where $\text{Rad}(V)$ is the Jacobson radical of $V$. See \cite[Chap. IV and VIII]{CR} for background material about representation theory. 
\\
\indent For $q \in \mathbb{C}\setminus \{-1,0,1\}$, we define the $q$-integer $[n]$ (with $n \in \mathbb{Z})$ and the $q$-factorial $[m]!$ (with $m \in \mathbb{N})$ by:
$$ [n] = \frac{q^n - q^{-n}}{q-q^{-1}}, \:\:\:\:\: [0]!=1, \:\: [m]! = [1][2] \ldots [m] \: \text{ for } m \geq 1. $$
In what follows $q$ is a primitive $2p$-root of unity (where $p$ is a fixed integer $\geq 2$), say $q = e^{i\pi/p}$. Observe that in this case $[n] = \frac{\sin(n\pi/p)}{\sin(\pi/p)}$, $[p]=0$ and $[p-n] = [n]$.
\\
\indent As usual, $\delta_{i,j}$ will denote the Kronecker symbol and $I_n$ the identity matrix of size $n$.

\section{Preliminaries}\label{preliminaries}
\subsection{The restricted quantum group $\bar U_q(\mathfrak{sl}(2))$}
\indent As mentioned above, $q$ is a primitive root of unity of order $2p$, with $p \geq 2$. Recall that $\bar U_q(\mathfrak{sl}(2))$, the {\em restricted quantum group} associated to $\mathfrak{sl}(2)$, is the $\mathbb{C}$-algebra generated by $E, F, K$ together with the relations 
\begin{equation*}
E^p=F^p=0, \:\:\: K^{2p}=1,\:\:\: KE=q^2EK,\:\:\: KF=q^{-2}FK, \:\:\: EF = FE + \frac{K-K^{-1}}{q-q^{-1}}.
\end{equation*}
It will be simply denoted by $\bar U_q$ in the sequel. It is a $2p^3$-dimensional Hopf algebra, with comultiplication $\Delta$, counit $\varepsilon$ and antipode $S$ given by the following formulas:
\begin{equation*}
\begin{array}{lll}
\Delta(E) = 1 \otimes E + E \otimes K, & \Delta(F) = F \otimes 1 + K^{-1} \otimes F, & \Delta(K) = K \otimes K, \\
\varepsilon(E) = 0, & \varepsilon(F) = 0, & \varepsilon(K) = 1,\\
S(E) = -EK^{-1}, & S(F) = -KF, & S(K) = K^{-1}.
\end{array}
\end{equation*}
\indent The monomials $E^mF^nK^{l}$ with $0 \leq m,n \leq p-1, \:0 \leq l \leq 2p-1$, form a basis of $\bar U_q$, usually referred as the PBW-basis. Recall the formula (see \cite[Prop. VII.1.3]{kassel}):
\begin{equation}\label{coproduitMonome}
\Delta(E^mF^nK^{l}) = \sum_{i=0}^m\sum_{j=0}^n q^{i(m-i) + j(n-j) - 2(m-i)(n-j)}{m \brack i}{n \brack j}E^{m-i}F^jK^{l+j-n} \otimes E^iF^{n-j}K^{l+m-i}.
\end{equation}
Recall that the $q$-binomial coefficients are defined by ${a \brack b} = \frac{[a]!}{[b]! [a-b]!}$ for $a \geq b$. \\
\indent Since $K$ is annihilated by the polynomial $X^{2p}-1$, which has simple roots over $\mathbb{C}$, the action of $K$ is diagonalizable on each $\bar U_q$-module, and the eigenvalues are $2p$-roots of unity.
\smallskip\\
\indent Due to the Hopf algebra structure on $\bar U_q$, its category of modules is a monoidal category with duals. It is not braided (see \cite{KS}).

\subsection{Simple and projective $\bar U_q$-modules}\label{sectionSimpleProj}
\indent The finite dimensional representations of $\bar U_q$ are classified (\cite{suter} and \cite{GSTF}). Two types of modules are important for our purposes: the simple and the projective modules. As in \cite{FGST} (see also \cite{ibanez}), we denote the simple modules by $\mathcal{X}^{\alpha}(s)$, with $\alpha \in \{\pm\}, 1 \leq s \leq p$. The modules $\mathcal{X}^{\pm}(p)$ are simple and projective simultaneously. The other indecomposable projective modules are not simple. We denote them by $\mathcal{P}^{\alpha}(s)$ with $\alpha \in \{\pm\}, 1 \leq s \leq p-1$. 
\smallskip\\
\indent The module $\mathcal{X}^{\alpha}(s)$ admits a {\em canonical basis} $\left(v_i\right)_{0 \leq i \leq s-1}$ such that
\begin{equation}\label{BaseSimple}
Kv_i = \alpha q^{s-1-2i}v_i,\: Ev_0=0,\: Ev_i=\alpha[i][s-i]v_{i-1},\: Fv_i=v_{i+1},\: Fv_{s-1}=0.
\end{equation}
The module $\mathcal{P}^{\alpha}(s)$ admits a {\em standard basis} $\left(b_i, x_j, y_k, a_{l}\right)_{\substack{0 \leq i,l \leq s-1 \\ 0 \leq j,k \leq p-s-1}}$ such that

\begin{equation}\label{BaseProjectif}
\begin{array}{lll}
Kb_i=\alpha q^{s-1-2i}b_i, & Eb_i = \alpha[i][s-i]b_{i-1} + a_{i-1}, & Fb_i = b_{i+1}, \\
 & Eb_0 = x_{p-s-1}, & Fb_{s-1}=y_0,\\
Kx_j = -\alpha q^{p-s-1-2j}x_j, & Ex_j = -\alpha[j][p-s-j]x_{j-1}, &  Fx_j = x_{j+1},  \\
 & Ex_0=0, & Fx_{p-s-1}=a_0, \\
Ky_k = -\alpha q^{p-s-1-2k}y_k, & Ey_k = -\alpha[k][p-s-k]y_{k-1}, & Fy_k = y_{k+1}, \\
 & Ey_0 = a_{s-1}, & Fy_{p-s-1} = 0, \\
Ka_{l} = \alpha q^{s-1-2l}a_{l}, & Ea_{l} = \alpha[l][s-l]a_{l-1}, & Fa_{l} = a_{l+1}, \\
 & Ea_0 = 0, & Fa_{s-1}=0.
\end{array}
\end{equation}

Note that such a basis is not unique up to scalar since we can replace $b_i$ by $b_i + \lambda a_i$ (with $\lambda \in \mathbb{C})$ without changing the action. 
\\\indent In terms of composition factors, the structure of $\mathcal{P}^{\alpha}(s)$ can be schematically represented as follows (with the basis vectors corresponding to each factor and the action of $E$ and $F$):
\begin{equation}\label{figureProj}
\xymatrix{
 & \Top\left(\mathcal{P}^{\alpha}(s)\right) \cong \mathcal{X}^{\alpha}(s), (b_i)_{0 \leq i \leq s-1} \ar[ld]^E \ar[rd]^F \ar[dd]^E& \\
(x_j)_{0 \leq j \leq p-s-1}, \mathcal{X}^{-\alpha}(p-s) \!\!\!\!\!\!\!\!\! \ar[rd]^F & & \!\!\!\!\!\!\!\!\! \mathcal{X}^{-\alpha}(p-s) \ar[ld]^E, (y_k)_{0 \leq k \leq p-s-1}\\  
 & \Soc\left(\mathcal{P}^{\alpha}(s)\right) \cong \mathcal{X}^{\alpha}(s), (a_{l})_{0 \leq l \leq s-1} &
}
\end{equation}
If we need to emphasize the module in which we are working, we will use the following notations: $v_i^{\alpha}(s)$ for the canonical basis of $\mathcal{X}^{\alpha}(s)$ and $b_i^{\alpha}(s)$, $x_j^{\alpha}(s)$, $y_k^{\alpha}(s)$, $a_{l}^{\alpha}(s)$ for a  standard basis of $\mathcal{P}^{\alpha}(s)$ (these are the notations used in \cite{arike}).
\smallskip\\
\indent Let us recall the $\bar U_q$-morphisms between these modules. Observe that $\mathcal{X}^{\alpha}(s)$ is $\bar U_q$-generated by $v^{\alpha}_0(s)$ and $\mathcal{P}^{\alpha}(s)$ is $\bar U_q$-generated by $b^{\alpha}_0(s)$, so the images of these vectors suffice to define $\bar U_q$-morphisms. $\mathcal{X}^{\alpha}(s)$ is simple, so by Schur's lemma $\End_{\bar U_q}\left(\mathcal{X}^{\alpha}(s)\right) = \mathbb{C}\text{Id}$. Since 
$$\mathcal{X}^{\alpha}(s) \cong \Top\left(\mathcal{P}^{\alpha}(s)\right) \cong \Soc\left(\mathcal{P}^{\alpha}(s)\right)$$
there exist injection and projection maps defined by:
\begin{equation*}
\begin{array}{lcl}
\mathcal{X}^{\alpha}(s) & \hookrightarrow & \mathcal{P}^{\alpha}(s)\\
v_0^{\alpha}(s) & \mapsto & a_0^{\alpha}(s)
\end{array} \:\:\: \text{ and } \:\:\:\:\,
\begin{array}{lcl}
\mathcal{P}^{\alpha}(s) & \twoheadrightarrow & \mathcal{X}^{\alpha}(s) \\
b_0^{\alpha}(s) & \mapsto & v_0^{\alpha}(s).
\end{array}
\end{equation*}
We have $\End_{\bar U_q}\left(\mathcal{P}^{\alpha}(s)\right) = \mathbb{C}\text{Id} \oplus \mathbb{C}p^{\alpha}_s$ and $\Hom_{\bar U_q}\left(\mathcal{P}^{\alpha}(s), \mathcal{P}^{-\alpha}(p-s)\right) = \mathbb{C}P^{\alpha}_s \oplus \mathbb{C}\overline{P}^{\alpha}_s$, where:
\begin{equation}\label{morphismesP}
p^{\alpha}_s\left(b_0^{\alpha}(s)\right) = a_0^{\alpha}(s), \:\:\:\:\:\: P^{\alpha}_s\left(b_0^{\alpha}(s)\right) = x_0^{-\alpha}(p-s), \:\:\:\:\:\: \overline{P}^{\alpha}_s\left(b_0^{\alpha}(s)\right) = y_0^{-\alpha}(p-s).
\end{equation}
The other Hom-spaces involving only simple modules and indecomposable projective modules are null.

\subsection{Structure of the bimodule $_{\bar U_q}\!\left(\bar U_q\right)_{\bar U_q}$ and the center of $\bar U_q$}\label{bimodEtCentre}
\indent Recall that if $M$ is a left module (over any $k$-algebra $A$), then $M^* = \Hom_{\mathbb{C}}(M, k)$ is endowed with a {\em right} $A$-module structure, given by:
\begin{equation*}
\forall \, a \in A, \: \forall\, \varphi \in M^*, \:\: \varphi a = \varphi(a \cdot)
\end{equation*}
where $\cdot$ is the place of the variable. We denote by $R^*(M)$ the so-defined right module. Note that if we define $R^*(f)$ as the transpose of $f$, then $R^*$ becomes a contravariant functor. If $A$ is a Hopf algebra, one must be aware not to confuse $R^*(M)$ with the categorical dual $M^*$, which is a left module on which $A$ acts by:
$$ \forall \, a \in A, \: \forall\, \varphi \in M^*, \:\: a\varphi = \varphi(S(a) \cdot). $$
\begin{lemma}The right $\bar U_q$-module $R^*(\mathcal{X}^{\alpha}(s))$ admits a basis $\left(\bar v_i\right)_{0 \leq i \leq s-1}$ such that
\begin{equation*}
\begin{array}{lllll}
\bar v_i K = \alpha q^{1-s+2i}\bar v_i, & \bar v_i E = \alpha[i][s-i] \bar v_{i-1}, & \bar v_0 E = 0, & v_i F = \bar v_{i+1}, & \bar v_{s-1}F = 0. \\
\end{array}
\end{equation*}
The right $\bar U_q$-module $R^*(\mathcal{P}^{\alpha}(s))$ admits a basis $\left(\bar b_{i}, \bar x_j, \bar y_k, \bar a_l\right)_{\substack{0 \leq i,l \leq s-1 \\ 0 \leq j ,k\leq p-s-1}}$ such that
\begin{equation*}
\begin{array}{lll}
   \bar b_i K = \alpha q^{1-s+2i}\bar b_i, & \bar b_i E = \bar a_{i-1} + \alpha[i][s-i] \bar b_{i-1}, & \bar b_i F = \bar b_{i+1}, \\
      & \bar b_0 E = \bar x_{p-s-1}, & \bar b_{s-1}F = \bar y_0, \\
   \bar x_j K = -\alpha q^{-p+s+1+2j} \bar x_j, & \bar x_j E = -\alpha [j][p-s-j]\bar x_{j-1}, & \bar x_j F = \bar x_{j+1}, \\
      &  \bar x_{0} E = 0, & \bar x_{p-s-1} F = \bar a_0, \\
   \bar y_{k} K = -\alpha q^{-p+s+1+2k} \bar y_k, & \bar y_k E = -\alpha[k][p-s-k] \bar y_{k-1}, & \bar y_k F = \bar y_{k+1}, \\
      &  \bar y_0 E = \bar a_{s-1}, & \bar y_{p-s-1}F = 0, \\
   \bar a_{l} K = \alpha q^{1-s+2l}\bar a_{l}, & \bar a_{l} E = \alpha[l][s-l] \bar a_{l-1}, & \bar a_{l} F = \bar a_{l+1}, \\
      & \bar a_0 E = 0, & \bar a_{s-1}F = 0.
\end{array}
\end{equation*}
\end{lemma}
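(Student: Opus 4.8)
The plan is to produce explicit bases of the right modules $R^*(\mathcal{X}^{\alpha}(s))$ and $R^*(\mathcal{P}^{\alpha}(s))$ and to check all the listed relations by hand. The starting point is the transpose principle: if $M$ is a finite dimensional left $\bar U_q$-module with basis $(m_j)$ and $(m_j^*)$ denotes the dual basis (a basis of $R^*(M)$), then $(m_j^*\, u)(m_k) = m_j^*(u\, m_k)$ for every $u \in \bar U_q$, so the matrix of the right action of $u$ on $R^*(M)$ in the basis $(m_j^*)$ is the transpose of the matrix of the left action of $u$ on $M$ in the basis $(m_j)$. Thus the only real content is to choose the correct indexing of the dual basis so that the formulas take the announced shape.

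I would first dispose of $\mathcal{X}^{\alpha}(s)$ as a warm-up. Put $\bar v_i := v_{s-1-i}^*$, i.e.\ reverse the order of the dual basis. Then \eqref{BaseSimple} gives the relations at once: for instance $(\bar v_i K)(v_j) = \alpha q^{s-1-2j}\delta_{s-1-i,\,j}$, whence $\bar v_i K = \alpha q^{1-s+2i}\bar v_i$, and similarly $\bar v_i E = \alpha[i][s-i]\bar v_{i-1}$ and $\bar v_i F = \bar v_{i+1}$. The point to notice is that it is exactly the reversal $i \mapsto s-1-i$ that turns the weight $\alpha q^{s-1-2i}$ of $v_i$ into its inverse $\alpha q^{1-s+2i}$, since the list of weights of $\mathcal{X}^{\alpha}(s)$ is palindromic.

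For $\mathcal{P}^{\alpha}(s)$ the correct dual basis is a little less transparent: in addition to reversing each of the four strings one must exchange the two copies of each composition factor. Concretely I would set $\bar b_i := a_{s-1-i}^*$, $\bar a_i := b_{s-1-i}^*$, $\bar x_j := y_{p-s-1-j}^*$ and $\bar y_k := x_{p-s-1-k}^*$. This choice is dictated by the general fact that $R^*$ is an exact contravariant duality, so that $\Soc\bigl(R^*(\mathcal{P}^{\alpha}(s))\bigr) \cong R^*\bigl(\Top(\mathcal{P}^{\alpha}(s))\bigr)$; this socle is nothing but the space of linear forms on $\mathcal{P}^{\alpha}(s)$ vanishing on $\text{Rad}(\mathcal{P}^{\alpha}(s))$, i.e.\ $\mathrm{span}(b_0^*, \dots, b_{s-1}^*)$, so the basis vectors playing the role of the socle of $\mathcal{P}^{\alpha}(s)$ (the $\bar a_l$) are forced to be the $b_l^*$, and the mirror argument exchanges the middle factors $\langle x_j \rangle$ and $\langle y_k \rangle$. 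With these definitions every relation reduces to a one-line inspection of \eqref{BaseProjectif}. For example, for $i \geq 1$ the vectors $w$ of the standard basis with $\bar b_i(Ew) = a_{s-1-i}^*(Ew) \neq 0$ are exactly $w = a_{s-i}$ (with coefficient $\alpha[s-i][i]$, from $Ea_{s-i}$) and $w = b_{s-i}$ (with coefficient $1$, from the ``$+\, a_{s-i-1}$'' in $Eb_{s-i}$), which gives $\bar b_i E = \alpha[i][s-i]\bar b_{i-1} + \bar a_{i-1}$; while for $i = 0$ the only contribution is $w = y_0$ (from $Ey_0 = a_{s-1}$), giving $\bar b_0 E = y_0^* = \bar x_{p-s-1}$.

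I do not expect any serious obstacle: once the correct (reversed and swapped) dual basis is written down, what remains is a finite, mechanical check of the relations in the statement. The only places that require a bit of attention are the ``boundary'' relations $\bar b_0 E = \bar x_{p-s-1}$, $\bar b_{s-1}F = \bar y_0$, $\bar x_{p-s-1}F = \bar a_0$ and $\bar y_0 E = \bar a_{s-1}$ — those corresponding to the maps between distinct composition factors of $\mathcal{P}^{\alpha}(s)$ displayed in \eqref{figureProj} — because it is there that the exchanges $b \leftrightarrow a$ and $x \leftrightarrow y$ have to be tracked carefully. To conclude, one notes that the two families have the expected cardinalities $s = \dim \mathcal{X}^{\alpha}(s)$ and $2p = \dim \mathcal{P}^{\alpha}(s)$, hence are indeed bases.
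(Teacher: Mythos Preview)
Your proposal is correct and matches the paper's own proof exactly: the paper also defines $\bar v_i = v^{s-1-i}$ for the simple module and $\bar b_i = a^{s-1-i}$, $\bar x_j = y^{p-s-1-j}$, $\bar y_k = x^{p-s-1-k}$, $\bar a_l = b^{s-1-l}$ for the projective, then declares that this ``gives the desired result.'' Your write-up is in fact more detailed than the paper's, which omits the sample verifications and the conceptual explanation for the swap $b \leftrightarrow a$, $x \leftrightarrow y$.
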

\noindent Such basis will be termed respectively a {\em canonical basis} and a {\em standard basis} in the sequel.
\begin{proof}
Let $(v^i)_{0 \leq i \leq s-1}$ be the basis dual to the canonical basis given in (\ref{BaseSimple}). Then $\bar v_i = v^{s-1-i}$ gives the desired result. Similarly, let $\left(b^i, x^j, y^k, a^{l}\right)_{\substack{0 \leq i,l \leq s-1 \\ 0 \leq j,k \leq p-s-1}}$ be the basis dual to a standard basis given in (\ref{BaseProjectif}). Then 
$$ \bar b_i = a^{s-1-i}, \:\: \bar x_j = y^{p-s-1-j}, \:\: \bar y_k = x^{p-s-1-k}, \:\: \bar a_{l} = b^{s-1-l} $$
gives the desired result.
\end{proof}

\indent We denote by $_{\bar U_q}\!\left(\bar U_q\right)_{\bar U_q}$ the regular bimodule, where the left and right actions are respectively the left and right multiplication of $\bar U_q$ on itself. Recall that a block of $_{\bar U_q}\!\left(\bar U_q\right)_{\bar U_q}$ is just an indecomposable two-sided ideal (see \cite[Section 55]{CR}). The block decomposition of $\bar U_q$ is (see \cite{FGST})
\begin{equation*}
_{\bar U_q}\!\left(\bar U_q\right)_{\bar U_q} = \bigoplus_{s=0}^{p} Q(s)
\end{equation*}
where the structure of each block $Q(s)$ as a left $\bar U_q$-module is: 
\begin{equation}\label{leftBlock}
\begin{array}{l}
Q(0) \cong p\mathcal{X}^{-}(p), \:\:\:\:\: Q(p) \cong p\mathcal{X}^{+}(p),\\
Q(s) \cong s\mathcal{P}^{+}(s) \oplus (p-s)\mathcal{P}^{-}(p-s) \: \text{ for } 1 \leq s \leq p-1
\end{array}
\end{equation}
and the structure of each block as a right $\bar U_q$-module is:
\begin{equation*}
\begin{array}{l}
Q(0) \cong pR^*\!\left(\mathcal{X}^{-}(p)\right), \:\:\:\:\: Q(p) \cong pR^*\!\left(\mathcal{X}^{+}(p)\right),\\
Q(s) \cong sR^*\!\left(\mathcal{P}^{+}(s)\right) \oplus (p-s)R^*\!\left(\mathcal{P}^{-}(p-s)\right) \: \text{ for } 1 \leq s \leq p-1.
\end{array}
\end{equation*}

The following proposition is a reformulation of \cite[Prop. 4.4.2]{FGST} (see also \cite[Th. II.1.4]{ibanez}). It will be used for the proof of Theorem \ref{MainResult}.
\begin{proposition}\label{baseBloc}
For $1 \leq s \leq p-1$, the block $Q(s)$ admits a basis $$\left(B^{++}_{ab}(s), X^{-+}_{cd}(s), Y^{-+}_{ef}(s), A^{++}_{gh}(s), B^{--}_{ij}(s), X^{+-}_{kl}(s), Y^{+-}_{mn}(s), A^{--}_{or}(s) \right)$$
with $0 \leq a,b,d,f,g,h,k,m \leq s-1, \:\: 0 \leq c, e, i,j,l,n,o,r \leq p-s-1$, such that 
\begin{enumerate}
\item $\forall\, 0 \leq j \leq s-1, \:\: \left(B^{++}_{ij}(s), X^{-+}_{kj}(s), Y^{-+}_{lj}(s), A^{++}_{mj}(s)\right)_{\substack{0 \leq i,m \leq s-1 \\ 0 \leq k,l \leq p-s-1}}$ is a standard basis of $\mathcal{P}^{+}(s)$ for the left action.
\item $\forall \, 0 \leq j \leq p-s-1, \:\: \left(B^{--}_{ij}(s), X^{+-}_{kj}(s), Y^{+-}_{lj}(s), A^{--}_{mj}(s)\right)_{\substack{0 \leq k,l \leq s-1 \\ 0 \leq i,m \leq p-s-1}}$ is a standard basis of $\mathcal{P}^{-}(p-s)$ for the left action.
\item $\forall\, 0 \leq i \leq s-1, \:\: \left(B^{++}_{ij}(s), X^{+-}_{ik}(s), Y^{+-}_{il}(s), A^{++}_{im}(s)\right)_{\substack{0 \leq j,m \leq s-1 \\ 0 \leq k,l \leq p-s-1}}$ is a standard basis of $R^*\left(\mathcal{P}^{+}(s)\right)$ for the right action.
\item $\forall\, 0 \leq i \leq p-s-1, \:\: \left(B^{--}_{ij}(s), X^{-+}_{ik}(s), Y^{-+}_{il}(s), A^{--}_{im}(s)\right)_{\substack{0 \leq k,l \leq s-1 \\ 0 \leq j,m \leq p-s-1}}$ is a standard basis of $R^*\left(\mathcal{P}^{-}(p-s)\right)$ for the right action.
\end{enumerate}
The block $Q(0)$ admits a basis $\left(A^{--}_{ij}(0)\right)_{0 \leq i,j \leq p-1}$ such that
\begin{enumerate}
\item $\forall \, 0 \leq j \leq p-1, \:\: \left(A^{--}_{ij}(0)\right)_{0 \leq i \leq p-1}$ is a standard basis of $\mathcal{X}^{-}(p)$ for the left action.
\item $\forall \, 0 \leq i \leq p-1, \:\: \left(A^{--}_{ij}(0)\right)_{0 \leq j \leq p-1}$ is a standard basis of $R^*\left(\mathcal{X}^{-}(p)\right)$ for the right action.
\end{enumerate}
The block $Q(p)$ admits a basis $\left(A^{++}_{ij}(p)\right)_{0 \leq i,j \leq p-1}$ such that
\begin{enumerate}
\item $\forall \, 0 \leq j \leq p-1, \:\: \left(A^{++}_{ij}(p)\right)_{0 \leq i \leq p-1}$ is a standard basis of $\mathcal{X}^{+}(p)$ for the left action.
\item $\forall \, 0 \leq i \leq p-1, \:\: \left(A^{++}_{ij}(p)\right)_{0 \leq j \leq p-1}$ is a standard basis of $R^*\left(\mathcal{X}^{+}(p)\right)$ for the right action.
\end{enumerate}
\end{proposition}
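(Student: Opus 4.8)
The proposition is a bimodule refinement of the left-module decomposition (\ref{leftBlock}) and of its right-module counterpart stated just above: we must produce \emph{one} basis of the algebra $Q(s)$ that is simultaneously adapted to both structures. The plan is to read such a basis off the Peirce decomposition of $Q(s)$ attached to a well-chosen system of primitive idempotents, and then to identify the resulting pieces with the eight families of the statement by matching the explicit actions (\ref{BaseSimple})--(\ref{BaseProjectif}), (\ref{morphismesP}) and the Lemma above. Since this is a reformulation of \cite[Prop. 4.4.2]{FGST}, one may equivalently quote that result and perform only the translation of conventions; I describe the direct route.

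First I would fix a complete set of orthogonal primitive idempotents $e_1^+,\dots,e_s^+,e_1^-,\dots,e_{p-s}^-$ of $Q(s)$, summing to the block unit, with $Q(s)e_a^+\cong\mathcal P^+(s)$ and $Q(s)e_b^-\cong\mathcal P^-(p-s)$ as left modules; such a system exists by (\ref{leftBlock}) and the Krull--Schmidt theorem \cite{CR}. This yields the Peirce decomposition $Q(s)=\bigoplus_{u,v}e_uQ(s)e_v$, the sum running over the $p$ chosen idempotents, together with canonical isomorphisms $e_uQ(s)e_v\cong\Hom_{\bar U_q}\!\big(Q(s)e_u,Q(s)e_v\big)$. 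By the Hom-space computations recalled in section \ref{sectionSimpleProj} every Peirce block is $2$-dimensional, spanned by an analogue of $\mathrm{Id}$ and an analogue of $p^\alpha_s$ when the row and column idempotents have the same type, and by analogues of the maps $P^\alpha_s$ and $\overline{P}^{\alpha}_s$ of (\ref{morphismesP}) when they have opposite types. Grouping the blocks according to the two types (in $\{+,-\}$) of their row and column idempotents gives four classes, and reading off the two basis vectors of each block over all blocks produces the eight families: $B^{++},A^{++}$ from the $(+,+)$-blocks (there being $s^2$ of them), $B^{--},A^{--}$ from the $(-,-)$-blocks ($(p-s)^2$), $X^{-+},Y^{-+}$ from the $(-,+)$-blocks ($s(p-s)$), and $X^{+-},Y^{+-}$ from the $(+,-)$-blocks ($s(p-s)$). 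Recording the row idempotent by the first index and the column idempotent by the second then accounts both for the stated index ranges and for $\dim Q(s)=2s^2+4s(p-s)+2(p-s)^2=2p^2$.

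It then remains to match this bookkeeping with the explicit module descriptions. For a fixed column idempotent $e_v$ of type $+$ one has $Q(s)e_v\cong\mathcal P^+(s)$ as a left module, and the members of the eight families with second index $v$ have exactly the index set of a standard basis of $\mathcal P^+(s)$ — $b$'s and $a$'s coming from the $+$-rows, $x$'s and $y$'s from the $-$-rows — so choosing the Peirce bases compatibly with (\ref{BaseProjectif}) and (\ref{morphismesP}) realizes the left action of (\ref{BaseProjectif}); this gives items~1 and~2, and freezing instead a row idempotent of type $+$ gives $e_uQ(s)\cong R^*(\mathcal P^+(s))$ and, via the standard right-module basis of the Lemma above, items~3 and~4. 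The blocks $Q(0)\cong p\,\mathcal X^-(p)$ and $Q(p)\cong p\,\mathcal X^+(p)$ are semisimple, their unique simple being projective, hence isomorphic to $M_p(\mathbb C)$, so the matrix units $A^{--}_{ij}(0)$ and $A^{++}_{ij}(p)$ do the job at once.

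The one genuine difficulty is the coherence of the two adapted bases: the standard left basis read off in each column and the standard right basis read off in each row are a priori independent choices, and they must be aligned so that the \emph{same} vectors serve both, with the scalars of (\ref{BaseProjectif}), (\ref{morphismesP}) and of the Lemma all consistent. This alignment is arranged using the available gauge freedom — the substitution $b_i\mapsto b_i+\lambda a_i$ noted after (\ref{BaseProjectif}), the rescalings of $P^\alpha_s$ and $\overline{P}^{\alpha}_s$, and the conjugation freedom on the idempotents — and it is possible precisely because $\bar U_q$ is a Frobenius algebra, so that the right-module structure of $Q(s)$ is forced to be the $R^*$ of its left-module structure with matching combinatorics. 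Once this is set up, verifying the eight families and their index ranges is a finite, routine check.
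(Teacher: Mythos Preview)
The paper gives no proof of this proposition: it is stated as a reformulation of \cite[Prop.~4.4.2]{FGST} (and \cite[Th.~II.1.4]{ibanez}), so there is nothing to compare against beyond that citation. Your Peirce-decomposition outline is precisely the kind of argument one obtains by unwinding those references, and it is sound.

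One imprecision is worth tightening. You write ``a well-chosen system of primitive idempotents'' but then justify existence only by Krull--Schmidt. An \emph{arbitrary} system will not do: in the statement, the first index of $B^{++}_{ij}(s)$ is not merely a row-idempotent label but the weight index in a standard basis of $\mathcal P^+(s)$, so in particular $K\,B^{++}_{ij}(s)=q^{\,s-1-2i}B^{++}_{ij}(s)$. This forces each Peirce piece $e_i^{\pm}Q(s)e_j^{\pm}$ to lie in a single $K$-weight space, which holds exactly when the idempotents commute with $K$. The primitive idempotents of \cite{arike}, built from the projectors $\Phi^\alpha_n$, have this property. With that choice each two-dimensional Peirce piece is a weight space, its two basis vectors can be taken to be the pair $(b_i,a_i)$ or $(x_k,y_k)$ of (\ref{BaseProjectif}), and the left/right coherence you correctly flag as the genuine difficulty is then handled by the gauge freedom you list (the shift $b_i\mapsto b_i+\lambda a_i$, rescaling of $P^\alpha_s,\overline P^\alpha_s$, and conjugation of idempotents). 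In short, ``well-chosen'' should read ``commuting with $K$, e.g.\ as in \cite{arike}''.
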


\noindent As in \cite{FGST}, the structure of $Q(s)$ in terms of composition factors can be schematically represented as follows (each vertex represents a composition factor and is labelled by the basis vectors of this factor):
{\footnotesize
$$
\xymatrix{
        & \left(B^{++}_{ab}(s)\right) \ar[ld]^E \ar[rd]^F \ar[dd]^E &      &        &  \left(B^{--}_{ij}(s)\right) \ar[ld]^E \ar[rd]^F \ar[dd]^E&    \\
\left(X^{-+}_{cd}(s)\right) \ar[dr]^F &    &  (Y^{-+}_{ef}(s)) \ar[dl]^E  &  \left(X^{+-}_{cd}(s)\right) \ar[dr]^F  &  &  \left(Y^{+-}_{mn}(s) \ar[dl]^E\right) \\
        &  (A^{++}_{gh}(s)) &      &        &  \left(A^{--}_{or}(s)\right) & 
}
$$
}
for the left action, and
{\footnotesize
$$
\xymatrix{
        & \left(B^{++}_{ab}(s)\right) \ar[rrd]^E \ar[rrrrd]^F \ar[dd]^E&      &        &  \left(B^{--}_{ij}(s)\right) \ar[lllld]^E \ar[lld]^F \ar[dd]^E&    \\
\left(X^{-+}_{cd}(s)\right) \ar[rrrrd]^F &    &  (Y^{-+}_{ef}(s)) \ar[rrd]^E  &  \left(X^{+-}_{cd}(s)\right) \ar[lld]^F &  &  \left(Y^{+-}_{mn}(s)\right) \ar[lllld]^E\\
        &  (A^{++}_{gh}(s)) &      &        &  \left(A^{--}_{or}(s)\right) & 
}
$$
}
for the right action.
\smallskip\\
\indent The knowledge of the structure of the bimodule $_{\bar U_q}\!\left(\bar U_q\right)_{\bar U_q}$ allows us to determine the center of $\bar U_q$. Indeed, each central element determines a bimodule endomorphism and conversely. Recall from \cite{FGST} that $\mathcal{Z}(\bar U_q)$ is a $(3p-1)$-dimensional algebra with basis elements $e_s \:\:(0 \leq s \leq p)$ and $w^{\pm}_t \:\: (1 \leq t \leq p-1)$. The element $e_s$ is just the unit of the block $Q(s)$, thus by (\ref{leftBlock}) and (\ref{figureProj}) the action of $e_s$ on the simple and the projective modules is given by
\begin{equation}\label{actionEs}
\begin{array}{llll}
\text{For } s=0, \:\: &e_0 v_0^+(t) = 0, & e_0 v_0^-(t) =\delta_{t,p}v_0^-(p), & e_0 b_0^{\pm}(t) = 0, \\
\text{For }1 \leq s \leq p-1, \:\: &e_s v_0^{+}(t) = \delta_{s,t}v_0^+(s), & e_s v_0^-(t) = \delta_{p-s,t}v_0^{-}(p-s), & \\
 & e_s b_0^+(t) = \delta_{s,t}b_0^+(s), & e_s b_0^-(t) = \delta_{t,p-s}b_0^-(p-s), & \\
\text{For } s=p, \:\: &e_p v_0^+(t) = \delta_{t,p}v_0^+(p), & e_p v_0^-(t) =0, & e_p b_0^{\pm}(t) = 0
\end{array}
\end{equation}
while for the elements $w^{\pm}_s$:
\begin{equation}\label{actionWs}
\begin{array}{lll}
w^+_s v_0^{\pm}(t) = 0, & w^+_s b_0^+(t) = \delta_{s,t}a_0^+(s), & w^+_s b_0^-(t) = 0,\\
w^-_s v_0^{\pm}(t) = 0, & w^-_s b_0^+(t) = 0, & w^-_s b_0^-(t) = \delta_{t,p-s}a_0^-(p-s).\\
\end{array}
\end{equation}
Observe that 
$$\overset{\mathcal{P}^+(s)}{w_s^+} = p^+_s, \:\:\:\: \overset{\mathcal{P}^-(p-s)}{w_s^-} = p^-_{p-s}.$$
The action of the central elements on $\mathcal{P}^{\alpha}(s)$ is enough to recover their action on every module, using projective covers. From these formulas, we deduce the multiplication rules of these elements:
\begin{equation}\label{produitCentre}
e_se_t = \delta_{s,t}e_s, \:\:\: e_sw^{\pm}_t = \delta_{s,t}w^{\pm}_s, \:\:\: w^{\pm}_sw^{\pm}_t = 0.
\end{equation}
\noindent Let us mention that the idempotents $e_s$ are not primitive: there exists primitive orthogonal idempotents $e_{s,i}$ such that $e_s = \sum_{i}e_{s,i}$, see \cite{arike}.

\section{Symmetric linear forms and the GTA basis}\label{sectionSLF}
\indent Let $A$ be a $k$-algebra, and let $\SLF(A)$ be the space of {\em symmetric linear forms} on $A$:
$$ \SLF(A) = \left\{\varphi \in A^* \, \vert \, \forall\, x,y \in A, \:\: \varphi(xy) = \varphi(yx)\right\}. $$
If $A$ is a bialgebra, then $A^*$ is an algebra whose product is defined by:
$$ \varphi\psi(x) = \sum_{(x)}\varphi(x')\psi(x'') $$
with $\Delta(x) = \sum_{(x)} x' \otimes x''$ (Sweedler's notation, see e.g. \cite[Chap. 3]{kassel}). Then $\SLF(A)$ is a subalgebra of $A^*$. Indeed, if $\varphi, \psi \in \SLF(A)$, we have:
$$ \varphi\psi(xy) = \sum_{(x),(y)}\varphi(x'y')\psi(x''y'') = \sum_{(x),(y)}\varphi(y'x')\psi(y''x'') = \varphi\psi(yx) $$
which shows that $\varphi\psi \in \SLF(A)$. If moreover $A$ is finite dimensional, then $A^*$ is a bialgebra whose coproduct is defined by $\Delta(\varphi)(x \otimes y) = \varphi(xy)$, but $\SLF(A)$ is not in general a sub-coalgebra of $A^*$, see Remark \ref{remarkCogebreSLF} below.
\smallskip\\
\indent Recall (see \cite{FGST}) that there is a universal $R$-matrix $R$ belonging to the extension of $\bar U_q$ by a square root of $K$. It satisfies $RR' \in \bar U_q^{\otimes 2}$, where $R' = \tau(R)$, with $\tau$ the flip map defined by $\tau(x \otimes y) = y \otimes x$. Moreover $\bar U_q$ is factorizable (in a generalized sense since it does not contain the $R$-matrix) and $K^{p+1}$ is a pivotal element, thus it is known from general theory that the Drinfeld morphism which we denote $\mathcal{D}$ provides an isomorphism of algebras
\begin{equation}\label{morphismeDrinfeld}
\foncIso{\mathcal{D}}{\SLF(\bar U_q)}{\mathcal{Z}(\bar U_q)}{\varphi}{\left(\varphi \otimes \text{Id}\right)\left((K^{p+1} \otimes 1)\cdot RR'\right)}
\end{equation}
\indent Let $A$ be a $k$-algebra, and $V$ an $n$-dimensional $A$-module. If we choose a basis on $V$, we get a matrix $\overset{V}{T} \in \text{Mat}_{n}(A^*)$, simply defined by
\begin{equation}\label{defT}
\overset{V}{T}(x) = \overset{V}{x}
\end{equation}
where $\overset{V}{x}$ is the representation of $x \in A$ in $\End(V)$ expressed in the choosen basis. In our case, we will always choose the canonical bases of the simple modules and standard bases of the projective modules.
\smallskip\\
\indent An interesting basis of $\SLF(\bar U_q)$ was found by Gainutdinov and Tipunin in \cite{GT} and by Arike in \cite{arike}. To be precise, a basis of the space $\mathrm{qCh}(\bar U_q)$ of $q$-characters is constructed in \cite{GT}, but the shift by the pivotal element $g=K^{p+1}$ provides an isomorphism 
$$\mathrm{qCh}(\bar U_q) \overset{\sim}{\rightarrow} \mathrm{SLF}(\bar U_q), \:\:\:\:\psi \mapsto\psi(g\,\cdot).$$
\indent This basis is built from the simple and the projective modules. First, define $2p$ linear forms\footnote{The correspondence of notations with \cite{arike} is: $T^+_s = \chi^+_s$, $T^-_s = \chi^-_{p-s}$. The letter $T$ is here reserved for the matrices $\overset{V}{T}$ described above.} $\chi^{\alpha}_s$, $\alpha \in \{\pm\}, 1 \leq s \leq p$, by:
\begin{equation}\label{defChis}
\chi^{\alpha}_s = \text{tr}(\overset{\mathcal{X}^{\alpha}(s)}{T}).
\end{equation}
They are obviously symmetric. Observe that $\chi^+_1 = \varepsilon$ is the unit for the algebra structure on $\SLF(\bar U_q)$ described above. To construct the $p-1$ missing linear forms, observe with the help of (\ref{figureProj}) that the matrix of the action on $\mathcal{P}^{\alpha}(s)$ has the following block form in a standard basis:
\begin{equation*}
\overset{\mathcal{P}^{\alpha}(s)}{T} =
\begin{blockarray}{ccccc}
(b_i) & (x_j) & (y_k) & (a_{l}) &\\
\begin{block}{(cccc)c}
\overset{\mathcal{X}^{\alpha}(s)}{T} & 0 & 0 & 0 & (b_i) \\
A^{\alpha}_s & \overset{\mathcal{X}^{-\alpha}(p-s)}{T} & 0 & 0 & (x_j)\\
B^{\alpha}_s & 0 & \overset{\mathcal{X}^{-\alpha}(p-s)}{T} & 0 & (y_k)\\
H^{\alpha}_s & D^{\alpha}_s & C^{\alpha}_s & \overset{\mathcal{X}^{\alpha}(s)}{T} & (a_{l}).\\
\end{block}
\end{blockarray}
\end{equation*}
It is not difficult to see that these matrices satisfy the following symmetries:
\begin{equation*}
A^-_{p-s} = C^+_s, \:\:\: B^-_{p-s} = D^+_s, \:\:\: D^-_{p-s} = B^+_s, \:\:\: C^-_{p-s} = A^+_s .
\end{equation*}
By computing the matrices $\overset{\mathcal{P}^{+}(s)}{(xy)} = \overset{\mathcal{P}^{+}(s)}{x}\overset{\mathcal{P}^{+}(s)}{y}$ and $\overset{\mathcal{P}^{-}(p-s)}{(xy)} = \overset{\mathcal{P}^{-}(p-s)}{x}\overset{\mathcal{P}^{-}(p-s)}{y}$, these symmetries allow us to see that the linear form $G_s \: (1 \leq s \leq p-1)$ defined by
\begin{equation}\label{defGs}
G_s = \text{tr}(H^+_s) + \text{tr}(H^-_{p-s})
\end{equation}
is a symmetric linear form.
\smallskip\\ \indent It is instructive for our purposes to see a proof that these symmetric linear forms are linearly independent. Let us begin by introducing important elements for $0 \leq n \leq p-1$ (they are discrete Fourier transforms of $(K^{l})_{0 \leq l \leq 2p-1}$): 
\begin{equation*}
\Phi^{\alpha}_n = \frac{1}{2p}\sum_{l=0}^{2p-1}\left(\alpha q^{-n}\right)^{l}K^{l}.
\end{equation*}
The following easy lemma shows that these elements allow one to select vectors which have a given weight, and this turns out to be very useful. 
\begin{lemma}\label{lemmePoids}
1) Let $M$ be a left $\bar U_q$-module, and let $m^+_i(s)$ be a vector of weight $q^{s-1-2i}$, $m^-_i(p-s)$ be a vector of weight $-q^{(p-s)-1-2i} = q^{-s-1-2i}$, $m^-_i(s)$ be a vector of weight $-q^{s-1-2i}$, $m^+_i(p-s)$ be a vector of weight $q^{(p-s)-1-2i} = -q^{-s-1-2i}$. Then: 
\begin{align*}
&\Phi^{+}_{s-1}m_i^+(s) = \delta_{i,0}m^+_0(s), \:\:\: \Phi^{+}_{s-1}m_i^-(p-s) = 0,\\
&\Phi^{-}_{s-1}m_i^-(s) =  \delta_{i,0}m^-_0(s), \:\:\: \Phi^{-}_{s-1}m_i^+(p-s) = 0.
\end{align*}
2) Let $N$ be a right $\bar U_q$-module, and let $n^+_i(s)$ be a vector of weight $q^{1-s+2i}$, $n^-_i(p-s)$ be a vector of weight $-q^{1-(p-s)+2i} = q^{1+s+2i}$, $n^-_i(s)$ be a vector of weight $-q^{1-s+2i}$, $n^+_i(p-s)$ be a vector of weight $q^{1-(p-s)+2i} = -q^{1+s+2i}$. Then:
\begin{align*}
&n_i^+(s)\Phi^{+}_{s-1} = \delta_{i,s-1}n^+_{s-1}(s),\:\:\:n_i^-(p-s)\Phi^{+}_{s-1} = 0,\\
&n_i^-(s)\Phi^{-}_{s-1} = \delta_{i,s-1}n^-_{s-1}(s) ,\:\:\:n_i^+(p-s)\Phi^{-}_{s-1} = 0.
\end{align*}
\end{lemma}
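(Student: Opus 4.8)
The plan is to show that every $\Phi^{\alpha}_n$ acts on a weight vector as a scalar equal to $0$ or $1$, so that the lemma becomes a matter of comparing weights.

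First I would record the basic computation. If $v$ is a vector in a left $\bar U_q$-module with $Kv=\lambda v$ (necessarily $\lambda$ a $2p$-th root of unity, by the diagonalizability remark in Section~\ref{preliminaries}), then $K^{l}v=\lambda^{l}v$, hence $\Phi^{\alpha}_n v=\frac1{2p}\sum_{l=0}^{2p-1}(\alpha q^{-n}\lambda)^{l}v$. The number $\zeta:=\alpha q^{-n}\lambda$ is again a $2p$-th root of unity, so $\frac1{2p}\sum_{l=0}^{2p-1}\zeta^{l}$ equals $1$ if $\zeta=1$ and $0$ otherwise. Since $\alpha^{-1}=\alpha$, this says that $\Phi^{\alpha}_n$ is the projector onto the weight space of weight $\alpha q^{n}$: it fixes every vector of weight $\alpha q^{n}$ and annihilates every vector whose weight is any other $2p$-th root of unity. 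For a right module, the computation $v\Phi^{\alpha}_n=\frac1{2p}\sum_{l}(\alpha q^{-n}\lambda)^{l}v$ (using $vK=\lambda v$) gives the same conclusion with left modules replaced by right modules, so parts 1) and 2) reduce to the identical bookkeeping.

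Next I would specialize to $n=s-1$ and simply read off weights, using $-1=q^{p}$ throughout. For part 1) with $\alpha=+$: $\Phi^{+}_{s-1}$ keeps precisely the vectors of weight $q^{s-1}$; the vector $m^{+}_i(s)$ of weight $q^{s-1-2i}$ has this weight iff $q^{2i}=1$, i.e. $i\equiv 0\pmod p$, hence $i=0$ in the range $0\le i\le s-1$; the vector $m^{-}_i(p-s)$ of weight $-q^{p-s-1-2i}=q^{-s-1-2i}$ has this weight iff $s+i\equiv 0\pmod p$, which is impossible for $1\le s$ and $0\le i\le p-s-1$. The case $\alpha=-$ is the same with target weight $-q^{s-1}=q^{p+s-1}$, selecting only $m^{-}_0(s)$ among the listed vectors and killing every $m^{+}_i(p-s)$ (whose weight $q^{p-s-1-2i}$ would have to equal $q^{p+s-1}$, i.e. $s+i\equiv 0\pmod p$, again impossible). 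For part 2) the right-hand weights of $n^{+}_i(s),n^{-}_i(p-s),n^{-}_i(s),n^{+}_i(p-s)$, compared with the relevant target weight $\pm q^{s-1}$, yield the congruences $i\equiv s-1$, $i\equiv -1$, $i\equiv s-1$, $i\equiv -1\pmod p$ respectively, which in the index ranges $0\le i\le s-1$ and $0\le i\le p-s-1$ give $i=s-1$, impossible, $i=s-1$, impossible, i.e. exactly the four stated identities.

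Since the whole argument is a single finite geometric sum followed by weight counting, there is no real obstacle; the only point to watch is the bookkeeping, namely the identification $-1=q^{p}$ and the index ranges, which is precisely what guarantees that each congruence modulo $p$ (or $2p$) pins down a unique admissible index or none at all. I expect this routine check, rather than any conceptual step, to be the only place where a little care is needed.
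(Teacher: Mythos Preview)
Your proof is correct and is precisely the intended argument: the paper's own proof is the single sentence ``It follows from easy computations with sums of roots of unity,'' and what you have written is exactly those computations spelled out, namely the observation that $\Phi^{\alpha}_n$ is the projector onto the weight $\alpha q^{n}$ followed by the weight bookkeeping. There is no difference in approach.
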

\begin{proof}
It follows from easy computations with sums of roots of unity.
\end{proof}

We can now state the key observation.

\begin{proposition}\label{propArike}
Let
$$\varphi = \sum_{s=1}^{p} \left(\lambda^{+}_s\chi^{+}_s + \lambda^{-}_s\chi^{-}_s\right) + \sum_{s'=1}^{p-1}\mu_{s'} G_{s'} \in \text{\em SLF}\left(\bar U_q\right).$$
Then: 
$$ \lambda^+_s = \varphi\left(\Phi^+_{s-1}e_s\right),\:\: \lambda^-_s = \varphi\left(\Phi^-_{s-1}e_{p-s}\right), \:\: \mu_{s'} = \frac{\varphi\left(w^+_{s'}\right)}{s'} = \frac{\varphi(w_{s'}^-)}{p-s'}. $$
\end{proposition}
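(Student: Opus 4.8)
Since $\varphi$ is linear and the coefficients $\lambda^{\pm}_s,\mu_{s'}$ are given, the plan is simply to evaluate each basis element $\chi^{+}_t,\chi^{-}_t$ ($1\le t\le p$) and $G_{s''}$ ($1\le s''\le p-1$) on the four families of central elements $\Phi^{+}_{s-1}e_s$, $\Phi^{-}_{s-1}e_{p-s}$, $w^{+}_{s'}$, $w^{-}_{s'}$, and then read off the coefficients. Recalling that $\chi^{\alpha}_t = \text{tr}\big(\overset{\mathcal{X}^{\alpha}(t)}{T}\big)$ and that $G_{s''} = \text{tr}(H^{+}_{s''}) + \text{tr}(H^{-}_{p-s''})$, where $H^{\alpha}_{r}(x)$ denotes the corner block of $\overset{\mathcal{P}^{\alpha}(r)}{x}$ (rows indexed by the $(a_l)$, columns by the $(b_i)$ in a standard basis), everything reduces to the action of these particular elements on the simple and projective modules, for which (\ref{actionEs}), (\ref{actionWs}), (\ref{morphismesP}) and Lemma \ref{lemmePoids} give complete information.

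I would first record two observations. \emph{(i)} Each $\Phi^{\alpha}_n$ is a polynomial in $K$, and since the standard (resp.\ canonical) basis vectors are $K$-eigenvectors it acts diagonally on every $\mathcal{P}^{\beta}(r)$ (resp.\ $\mathcal{X}^{\beta}(r)$); concretely it is the projector onto the $K$-eigenspace of eigenvalue $\alpha q^{n}$. As a block idempotent $e_t$ acts on each $\mathcal{P}^{\beta}(r)$ as $0$ or as the identity, the operator $\overset{\mathcal{P}^{\beta}(r)}{\Phi^{\alpha}_{n}e_t}$ is again diagonal, so its corner block vanishes; hence $G_{s''}\big(\Phi^{+}_{s-1}e_s\big)=G_{s''}\big(\Phi^{-}_{s-1}e_{p-s}\big)=0$ for all $s''$. \emph{(ii)} By (\ref{actionWs}) the element $w^{+}_{s'}$ annihilates every simple module and every $\mathcal{P}^{-}(r)$, and acts on $\mathcal{P}^{+}(s'')$ as $\delta_{s',s''}\,p^{+}_{s'}$; since $p^{+}_{s'}$ sends $b^{+}_i(s')$ to $a^{+}_i(s')$ and kills the $x$-, $y$- and $a$-vectors, its corner block is $\delta_{s',s''}I_{s'}$. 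Symmetrically $w^{-}_{s'}$ kills all simples and all $\mathcal{P}^{+}(r)$ and acts on $\mathcal{P}^{-}(p-s')$ as $p^{-}_{p-s'}$, of corner block $I_{p-s'}$. Therefore $\chi^{\pm}_t(w^{\pm}_{s'})=0$, $G_{s''}(w^{+}_{s'})=\delta_{s',s''}\,s'$ and $G_{s''}(w^{-}_{s'})=\delta_{s',s''}\,(p-s')$, so that $\varphi(w^{+}_{s'})=\mu_{s'}s'$ and $\varphi(w^{-}_{s'})=\mu_{s'}(p-s')$, which is the third assertion.

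It remains to evaluate $\chi^{+}_t,\chi^{-}_t$ on $\Phi^{+}_{s-1}e_s$ and on $\Phi^{-}_{s-1}e_{p-s}$. By (\ref{actionEs}), $e_s$ acts on $\mathcal{X}^{+}(t)$ as $\delta_{s,t}\text{Id}$ and on $\mathcal{X}^{-}(t)$ as $\delta_{p-s,t}\text{Id}$ (the boundary cases $e_0,e_p$ being handled directly, since they annihilate $\mathcal{X}^{-}$, resp.\ $\mathcal{X}^{+}$, in the relevant range). By observation \emph{(i)}, $\overset{\mathcal{X}^{+}(s)}{\Phi^{+}_{s-1}}$ is the rank-one projector onto the line of $v^{+}_0(s)$, of trace $1$, while $\overset{\mathcal{X}^{-}(p-s)}{\Phi^{+}_{s-1}}=0$ because $q^{s-1}$ is not a $K$-eigenvalue on $\mathcal{X}^{-}(p-s)$. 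This gives $\chi^{+}_t(\Phi^{+}_{s-1}e_s)=\delta_{s,t}$ and $\chi^{-}_t(\Phi^{+}_{s-1}e_s)=0$, whence $\varphi(\Phi^{+}_{s-1}e_s)=\lambda^{+}_s$. The computation of $\varphi(\Phi^{-}_{s-1}e_{p-s})=\lambda^{-}_s$ is entirely parallel, exchanging $+\leftrightarrow-$, replacing $e_s$ by $e_{p-s}$, and using the $\Phi^{-}$-part of Lemma \ref{lemmePoids}.

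The genuinely delicate point, and the step I would take most care over, is the corner-block vanishing in observation \emph{(i)}: one must check that $\Phi^{\alpha}_n$ never carries a ``$b$-type'' standard basis vector of $\mathcal{P}^{\beta}(r)$ into the socle spanned by the ``$a$-type'' vectors. This is exactly what Lemma \ref{lemmePoids} provides, as it maps each weight vector to a scalar multiple of itself; but to invoke it one has to correctly match the weights of the $b_i,x_j,y_k,a_l$ of $\mathcal{P}^{\beta}(r)$ read off from (\ref{BaseProjectif}) with the four weight patterns of Lemma \ref{lemmePoids}, for both signs $\beta$ and for the relevant values $r\in\{s,\ p-s,\ s'',\ p-s''\}$. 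Once this bookkeeping is done, the three identities follow immediately by linearity of $\varphi$.
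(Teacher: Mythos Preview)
Your proposal is correct and follows essentially the same approach as the paper: both proofs reduce to evaluating the basis forms $\chi^{\pm}_t$ and $G_{s''}$ on the elements $\Phi^{\pm}_{s-1}e_{\bullet}$ and $w^{\pm}_{s'}$, using the actions \eqref{actionEs}, \eqref{actionWs} together with Lemma~\ref{lemmePoids}. Your write-up is simply more explicit; note that your ``delicate point'' about the corner block is actually immediate from the fact that $\Phi^{\alpha}_n$ is a polynomial in $K$ and the standard basis is a $K$-eigenbasis (so its matrix is diagonal), without needing the finer bookkeeping of Lemma~\ref{lemmePoids} there.
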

\begin{proof}
It is a corollary of (\ref{actionEs}) and (\ref{actionWs}). Indeed, we have:
$$ 
\begin{array}{l l l}
\overset{\mathcal{X}^+(s)}{T}(e_t) = \delta_{s,t}I_s, \:\:\: & \overset{\mathcal{X}^+(s)}{T}(w^{\pm}_t) = 0, \:\:\: & \overset{\mathcal{X}^-(s)}{T}(e_t) = \delta_{s, p-t}I_s,\\
\overset{\mathcal{X}^-(s)}{T}(w^{\pm}_t) = 0,& H^{\pm}_s(e_t) = 0, & H^+_s(w^+_s) = \delta_{s,t}I_{s}, \\
H^+_s(w^-_t) = 0, & H^-_{p-s}(w^+_t) = 0, & H^-_{p-s}(w^-_t) = \delta_{s,t}I_{p-s}.
\end{array}
$$
This gives the formula for $\mu_s$. The formulas for $\lambda^{\pm}_s$ follow from this and Lemma \ref{lemmePoids}.
\end{proof}

\noindent If we have $\sum_{s=1}^{p} \left(\lambda^{+}_s\chi^{+}_s + \lambda^{-}_s\chi^{-}_s\right) + \sum_{s'=1}^{p-1}\mu_{s'} G_{s'} = 0$, we can evaluate the left-hand side on the elements appearing in Proposition \ref{propArike} to get that all the coefficients are equal to $0$. Thus we have a free family of cardinal $3p-1$, hence a basis of $\SLF(\bar U_q)$, since $\dim(\SLF(\bar U_q)) = 3p-1$ by (\ref{morphismeDrinfeld}).

\begin{theorem}\label{thAri}
The symmetric linear forms $\chi^{\pm}_s \: (1 \leq s \leq p)$ and $G_{s'} \: (1 \leq s' \leq p-1)$ form a basis of $\SLF(\bar U_q)$.
\end{theorem}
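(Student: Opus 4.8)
The plan is to establish three facts and combine them: the proposed family has cardinality $3p-1$, it is linearly independent, and $\SLF(\bar U_q)$ has dimension $3p-1$; a free family of the right size in a finite dimensional space is automatically a basis. Membership in $\SLF(\bar U_q)$ is not an issue, since the discussion preceding the theorem already checks that each $\chi^{\alpha}_s$ and each $G_{s'}$ is a symmetric linear form.

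First I would count: the $\chi^{+}_s,\chi^{-}_s$ for $1\le s\le p$ give $2p$ forms, and the $G_{s'}$ for $1\le s'\le p-1$ give $p-1$ more, for a total of $3p-1$. For the dimension of $\SLF(\bar U_q)$ I would invoke the Drinfeld isomorphism $\mathcal{D}\colon\SLF(\bar U_q)\xrightarrow{\sim}\mathcal{Z}(\bar U_q)$ of (\ref{morphismeDrinfeld}); since $\mathcal{Z}(\bar U_q)$ has the basis $\{e_s\}_{0\le s\le p}\cup\{w^{\pm}_t\}_{1\le t\le p-1}$ recalled in Section \ref{bimodEtCentre}, it is $(p+1)+2(p-1)=3p-1$ dimensional, hence so is $\SLF(\bar U_q)$.

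For linear independence, I would suppose that $\sum_{s=1}^{p}\bigl(\lambda^{+}_s\chi^{+}_s+\lambda^{-}_s\chi^{-}_s\bigr)+\sum_{s'=1}^{p-1}\mu_{s'}G_{s'}=0$ as a linear form on $\bar U_q$, and then evaluate this identity on exactly the elements supplied by Proposition \ref{propArike}: testing against $\Phi^{+}_{s-1}e_s$ yields $\lambda^{+}_s=0$, testing against $\Phi^{-}_{s-1}e_{p-s}$ yields $\lambda^{-}_s=0$, and testing against $w^{+}_{s'}$ yields $s'\mu_{s'}=0$, hence $\mu_{s'}=0$ since $1\le s'\le p-1$. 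Thus all coefficients vanish and the family is free. This is essentially the argument already indicated in the remark following Proposition \ref{propArike}.

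Combining the three facts, the family $\{\chi^{\pm}_s\}_{1\le s\le p}\cup\{G_{s'}\}_{1\le s'\le p-1}$ is a linearly independent subset of $\SLF(\bar U_q)$ of cardinality equal to $\dim\SLF(\bar U_q)$, hence a basis. I do not expect any genuine obstacle here: all the real work is hidden in Proposition \ref{propArike} (ultimately the explicit action of the central elements on the simple and projective modules) and in the verification that the $G_{s'}$ are symmetric, both of which are available by the point the theorem is stated; the proof of the theorem itself is just the packaging of these ingredients together with the dimension count.
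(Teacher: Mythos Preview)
Your proposal is correct and follows exactly the paper's own argument: linear independence is obtained by evaluating a putative linear relation on the elements $\Phi^{+}_{s-1}e_s$, $\Phi^{-}_{s-1}e_{p-s}$, $w^{+}_{s'}$ from Proposition~\ref{propArike}, and the dimension count $\dim\SLF(\bar U_q)=3p-1$ comes from the Drinfeld isomorphism~(\ref{morphismeDrinfeld}) together with the known dimension of $\mathcal{Z}(\bar U_q)$. There is nothing to add or correct.
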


\begin{definition}
The basis of Theorem \ref{thAri} will be called the {\em GTA basis} (for Gainutdinov, Tipunin, Arike).
\end{definition}

\begin{remark}\label{remarkCogebreSLF}
Let $\varphi \in \SLF(\bar U_q)$. It is easy to see that $\varphi(K^jE^nF^m) = 0$ if $n \neq m$. From this we deduce that $\SLF(\bar U_q)$ is not a sub-coalgebra of $\bar U_q^*$. Indeed, write $\Delta(\chi^+_2) = \sum_i \varphi_i \otimes \psi_i$, and assume that $\varphi_i, \psi_i \in \SLF(\bar U_q)$. Then $ 1 = \chi^+_2(EF) = \sum_i \varphi_i(E)\psi_i(F) = 0 $, a contradiction.
\end{remark}

\begin{remark}
If we choose a basis of $\mathcal{Z}(\bar U_q)$, then its dual basis can not be entirely contained in $\SLF(\bar U_q)$. Indeed, let $\varphi = \sum_{s=0}^p\lambda^{\pm}_s \chi^{\pm}_s  + \sum_{s=1}^{p-1}\mu_s G_s \in \SLF(\bar U_q)$. Then $\varphi(w^+_s) = s\mu_s, \varphi(w^-_s) = (p-s)\mu_s$, and we see that there does not exist $\varphi \in \SLF(\bar U_q)$ such that $\varphi(w^+_s) = 1, \: \varphi(w^-_s) = 0$. Hence, $\SLF(\bar U_q) \subset \bar U_q^*$ is not the dual of $\mathcal{Z}(\bar U_q) \subset \bar U_q$.
\end{remark}

\section{Traces on projective $\bar U_q$-modules and the GTA basis}\label{sectionArike}

\subsection{Correspondence between traces and symmetric linear forms}\label{GenCor}
\indent Let $A$ be a finite dimensional $k$-algebra. We have an anti-isomorphism of algebras:
$$A \to\End_{A}(A), \:\:\: a \mapsto \rho_a \text{ defined by } \rho_a(x) = xa. $$
Observe that the right action of $A$ naturally appears.
Let $t$ be a trace on $A$, that is, an element of $\SLF(\End_A(A))$. Then:
$$t(\rho_{ab}) = t(\rho_b \circ \rho_a) = t(\rho_a \circ \rho_b) = t(\rho_{ba}).$$
So we get an isomorphism of vector spaces
\begin{equation*}
\fleche{\left\{\text{Traces on } \End_{A}(A)\right\} = \SLF\left(\End_{A}(A)\right)}{\SLF(A)}{t}{\varphi^t \text{ defined by } \varphi^t(a) = t(\rho_a).}
\end{equation*}
whose inverse is:
\begin{equation*}
\fleche{\SLF(A)}{\left\{\text{Traces on } \End_{A}(A)\right\} = \SLF\left(\End_{A}(A)\right)}{\varphi}{t^{\varphi} \text{ defined by } t^{\varphi}(\rho_a) = \varphi(a).}
\end{equation*}
In the case of $A=\bar U_q$, we can express $\varphi^t$ in the GTA basis, which will be the object of the next section.
\medskip\\
\indent Let $\Proj_A$ be the full subcategory of the category of finite dimensional $A$-modules whose objects are the projective $A$-modules.
\begin{definition}
A {\em trace} on $\Proj_A$ is a family of linear maps $t = \left(t_U : \End_A(U) \to k\right)_{U \in \Proj_A}$ such that
$$
\forall \, f \in \Hom_A(U,V), \: \forall\, g \in \Hom_A(V,U), \:\:  t_V(g \circ f) = t_U(f \circ g).
$$
We denote by $\mathcal{T}_{\Proj_A}$ the vector space of traces on $\Proj_A$.
\end{definition}
This cyclic property of traces on $\Proj_A$ is one of the axioms of the so-called modified traces, defined for instance in \cite{GKP}. Note that this definition could be restated in the following way (and could be generalized to other abelian full subcategories than $\Proj_A$).
\begin{lemma}\label{lemmeCoherence}
Let $t=(t_U : \End_A(U) \to k)_{U \in \Proj_A}$ be a family of linear maps. Then $t$ is a trace on $\text{\em Proj}_A$ if and only if:
\begin{itemize}
\item $\forall\, f,g \in \End_A(U), \:\:t_U(g \circ f) = t_U(f \circ g)$,
\item $t_{U \oplus V}(f) = t_{U}(p_U \circ f \circ i_U) + t_V(p_V \circ f \circ i_V)$, where $p_U, p_V$ are the canonical projection maps and $i_U, i_V$ are the canonical injection maps.
\end{itemize}
\end{lemma}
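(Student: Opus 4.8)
The plan is to prove the two bullet points are together equivalent to the single cyclicity axiom defining a trace on $\Proj_A$. One direction is immediate: if $t$ is a trace on $\Proj_A$, the first bullet is the special case $U=V$ of the defining property, and the second bullet follows by applying cyclicity to the four maps $i_U\colon U\to U\oplus V$, $p_U\colon U\oplus V\to U$ (and similarly for $V$) together with $f\in\End_A(U\oplus V)$: writing $\mathrm{Id}_{U\oplus V}=i_U\circ p_U+i_V\circ p_V$, one gets $t_{U\oplus V}(f)=t_{U\oplus V}(f\circ(i_U\circ p_U+i_V\circ p_V))=t_{U\oplus V}(f\circ i_U\circ p_U)+t_{U\oplus V}(f\circ i_V\circ p_V)$, and then cyclicity moves $p_U$ (resp. $p_V$) to the front, producing $t_U(p_U\circ f\circ i_U)+t_V(p_V\circ f\circ i_V)$.

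For the converse, suppose $t$ satisfies the two bullets; I must recover the general cyclic property $t_V(g\circ f)=t_U(f\circ g)$ for $f\colon U\to V$, $g\colon V\to U$ with $U,V$ projective. The key trick is to pass to the direct sum $W=U\oplus V$ and build endomorphisms of $W$ out of $f$ and $g$. Concretely, set $\tilde f=i_V\circ f\circ p_U\in\End_A(W)$ and $\tilde g=i_U\circ g\circ p_V\in\End_A(W)$. Then $\tilde g\circ\tilde f=i_U\circ(g\circ f)\circ p_U$ and $\tilde f\circ\tilde g=i_V\circ(f\circ g)\circ p_V$. Applying the second bullet to $\tilde g\circ\tilde f$ gives $t_W(\tilde g\circ\tilde f)=t_U\!\big(p_U\circ(\tilde g\circ\tilde f)\circ i_U\big)+t_V\!\big(p_V\circ(\tilde g\circ\tilde f)\circ i_V\big)=t_U(g\circ f)+0$, since $p_V\circ i_U=0$; likewise $t_W(\tilde f\circ\tilde g)=t_V(f\circ g)$. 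Finally the first bullet applied in $\End_A(W)$ gives $t_W(\tilde g\circ\tilde f)=t_W(\tilde f\circ\tilde g)$, and combining the three equalities yields $t_U(g\circ f)=t_V(f\circ g)$, as desired.

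One should also check the trivial consistency point that the second bullet is well posed, i.e. independent of the chosen direct-sum decomposition: any two realizations of $U\oplus V$ are related by an isomorphism, and conjugating by it together with the first bullet shows the right-hand side is unchanged. I expect the main (very mild) obstacle to be purely bookkeeping: keeping track of which composite $p\circ(-)\circ i$ vanishes because $p_V\circ i_U=0$ and $p_U\circ i_V=0$, and being careful that the first bullet is only assumed for endomorphisms of a \emph{single} module, so every use of cyclicity in the argument must take place inside $\End_A(W)$ for a fixed $W$. No deep input is needed beyond the splitting $\mathrm{Id}_{U\oplus V}=i_U\circ p_U+i_V\circ p_V$ and the elementary identities $p_U\circ i_U=\mathrm{Id}_U$, $p_V\circ i_V=\mathrm{Id}_V$, $p_U\circ i_V=0$, $p_V\circ i_U=0$.
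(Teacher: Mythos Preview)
Your argument is correct and is essentially identical to the paper's: the paper also passes to $W=U\oplus V$, defines $F=i_V f p_U$ and $G=i_U g p_V$ (your $\tilde f,\tilde g$), and combines the second bullet with cyclicity in $\End_A(W)$ to conclude. Your extra remark on well-posedness of the second bullet is a harmless addition not present in the paper.
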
 
\begin{proof}
If $t$ is a trace and $f \in \End_A(U \oplus V)$, we have:
$$ t_{U \oplus V}(f) =  t_{U \oplus V}((i_Up_U + i_Vp_V)f) = t_U(p_Ufi_U) + t_V(p_Vfi_V). $$
Conversely, let $f : U \to V$, $g : V \to U$. Define $F = i_V f p_U, G = i_U g p_V$. Then $FG = i_V f g p_V$ and $GF = i_U g f p_U$. We have $p_U GF i_U = gf$, $p_V GF i_V = 0$, $p_U FG i_U = 0$, $p_V FG i_V = fg$, thus:
$$ t_{V}(fg) = t_{U \oplus V}(FG) = t_{U \oplus V}(GF) = t_U(gf). $$
This shows the equivalence.
\end{proof}

\indent Now, consider:
\begin{equation*}
\begin{array}{crcccl}
\Pi_A \: : & \mathcal{T}_{\Proj_A} & \to & \SLF(\End_A(A)) & \overset{\sim}{\to} & \SLF(A)\\
&t=(t_U)_{U \in \Ob(\text{Proj}_A)} & \mapsto & t_A & \mapsto & \varphi^t \text{ defined by } \varphi^t(a) = t_A(\rho_a).
\end{array}
\end{equation*}

\begin{theorem}\label{corTracesSLF}
The map $\Pi_A$ is an isomorphism. In other words, $t_A$ entirely characterizes $t = (t_U)$.
\end{theorem}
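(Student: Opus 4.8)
The plan is to construct an explicit inverse to $\Pi_A$, i.e.\ to show that any $\varphi \in \SLF(A)$ extends to a trace $t = (t_U)_{U \in \Proj_A}$ whose value on the regular module recovers $\varphi$, and that such an extension is unique. The key point is that every finite-dimensional projective $A$-module is a direct summand of $A^n$ for some $n$: this is where finite-dimensionality and projectivity are used. So first I would fix $\varphi \in \SLF(A)$ and, using the anti-isomorphism $a \mapsto \rho_a$ from the excerpt, let $t_A := t^\varphi$ be the trace on $\End_A(A)$ defined by $t_A(\rho_a) = \varphi(a)$; more generally for $A^n$ set $t_{A^n}\bigl((f_{ij})\bigr) = \sum_i t_A(f_{ii})$ under the identification $\End_A(A^n) \cong \mathrm{Mat}_n(\End_A(A))$.

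Next, for an arbitrary $U \in \Proj_A$, choose a split injection $i : U \hookrightarrow A^n$ with retraction $p : A^n \twoheadrightarrow U$ (so $p \circ i = \mathrm{id}_U$), and define
$$ t_U(f) := t_{A^n}\bigl(i \circ f \circ p\bigr) \quad \text{for } f \in \End_A(U). $$
I would then check three things. (1) \emph{Well-definedness}: if $(i',p')$ is another splitting, then $i \circ f \circ p$ and $i' \circ f \circ p'$ are related by cyclic moves through $\End_A(A^n)$—indeed $t_{A^n}(i f p) = t_{A^n}\bigl((p' i)(i' f p')(p i')^{-1}?\bigr)$; more cleanly, $t_{A^n}(ifp) = t_U\bigl((pi)\cdots\bigr)$, using the cyclicity $t_{A^n}(gh) = t_{A^n}(hg)$ to move factors around so that only $p'i' = \mathrm{id}$ and $pi = \mathrm{id}$ survive. (2) \emph{Cyclicity}: given $f : U \to V$ and $g : V \to U$ with chosen splittings $(i_U,p_U)$, $(i_V,p_V)$ into $A^n$, $A^m$ (enlarge to a common $A^N$ if needed), one computes $t_V(gf\text{-part})$ versus $t_U(fg\text{-part})$ and reduces to the cyclicity of $t_{A^N}$; the direct-sum additivity of Lemma~\ref{lemmeCoherence} handles the bookkeeping of the complementary summand. (3) \emph{Normalization}: $t_A$ as just defined agrees with $t^\varphi$, which is immediate since one can take the splitting to be $(\mathrm{id},\mathrm{id})$.

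Finally I would verify that the two constructions are mutually inverse: $\Pi_A(t^\varphi) = \varphi$ holds by construction, and $t^{\Pi_A(t)} = t$ holds because any $t_U$ in a trace family is forced by the cyclic property to equal $t_{A^n}(i \circ f \circ p)$—apply the cyclic axiom to $i : U \to A^n$ and $f \circ p : A^n \to U$ to get $t_{A^n}\bigl((i f) \circ p\bigr)$ hmm, more precisely $t_U(f) = t_U\bigl((pi) f\bigr) = t_U\bigl(p \circ (i f p) \circ i \cdot ?\bigr)$—the honest statement is $t_U(f) = t_U(f \circ p \circ i) = t_{A^n}(i \circ f \circ p)$ by the defining cyclic property of a trace on $\Proj_A$ applied to the pair $f \circ p$ and $i$. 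Since $t_{A^n}$ is in turn determined by $t_A$ via additivity, the whole family $t$ is determined by $t_A$, giving injectivity of $\Pi_A$; surjectivity is the existence statement above.

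The main obstacle I anticipate is step (1)/(2): checking that $t_U$ does not depend on the choice of presentation $U \mid A^n$ and that cyclicity propagates, since one must juggle two different embeddings into possibly different free modules and repeatedly invoke the cyclicity of $t_{A^n}$ together with the additivity of Lemma~\ref{lemmeCoherence} to collapse the unwanted terms. Everything else is formal.
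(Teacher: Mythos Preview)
Your approach is correct and genuinely different from the paper's. You work with the characterization of projectives as direct summands of free modules $A^n$, whereas the paper fixes a decomposition $1 = e_1 + \cdots + e_n$ into primitive orthogonal idempotents and defines the trace on each PIM $Ae_i$ by $t^{\varphi}_{Ae_i}(f) = \varphi(f(e_i))$, using the identification $\Hom_A(Ae_i, Ae_j) \cong e_iAe_j$; the cyclic property then reduces to the symmetry of $\varphi$ on products $(e_ia_fe_j)(e_ja_ge_i)$, and the extension to arbitrary projectives goes via direct sums of PIMs. Your route avoids invoking the PIM structure and works in any setting where finite projectives are summands of finitely generated free modules (in particular over rings that are not finite-dimensional algebras), at the cost of having to establish the ``rectangular'' cyclicity $t_{A^m}(\alpha\beta) = t_{A^n}(\beta\alpha)$ for $\alpha : A^n \to A^m$, $\beta : A^m \to A^n$ and then the splitting-independence argument you flag as step~(1). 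That argument, once written cleanly, is: $t_{A^n}(ifp) = t_{A^n}\bigl((ip')(i'fp)\bigr) = t_{A^m}\bigl((i'fp)(ip')\bigr) = t_{A^m}(i'fp')$, using $p'i' = \mathrm{id}$ and $pi = \mathrm{id}$. Your injectivity argument via $t_U(f) = t_U\bigl((f p)\circ i\bigr) = t_{A^n}\bigl(i\circ(f p)\bigr)$ is the right one; the hesitant passages (``hmm'', ``?'') should simply be deleted and replaced by these one-line cyclic moves. The paper's proof is more concrete and tied to the block structure used later, while yours is more categorical and portable.
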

\begin{proof}
For all the facts concerning PIMs (Principal Indecomposable Modules) and idempotents in finite dimensional $k$-algebras, we refer to \cite[Chap. VIII]{CR}. We first show that $\Pi_A$ is surjective. Let: 
$$ 1 = e_1 + \ldots + e_n $$
be a decomposition of the unit into primitive orthogonal idempotents ($e_ie_j = \delta_{i,j}e_i$). Then the PIMs of $A$ are isomorphic to the left ideals $Ae_i$ (possibly with multiplicity). We have isomorphisms of vector spaces:
$$ \Hom_A(Ae_i, Ae_j) \overset{\sim}{\longrightarrow} e_iAe_j,\:\:\:\: f \mapsto f(e_i). $$
For every $\varphi \in \SLF(A)$, define $t^{\varphi}_{Ae_i}$ by:
$$ t^{\varphi}_{Ae_i}(f) = \varphi(f(e_i)). $$
Let $f : Ae_i \to Ae_j$, $g : Ae_j \to Ae_i$, and put $f(e_i) = e_ia_fe_j$, $g(e_j) = e_ja_ge_i$. Then using the idempotence of the $e_i$'s and the symmetry of $\varphi$ we get:
$$ t^{\varphi}_{Ae_i}(g \circ f) = \varphi(g \circ f(e_i)) = \varphi\left((e_ia_fe_j)(e_ja_ge_i)\right) = \varphi\left((e_ja_ge_i)(e_ia_fe_j)\right) = \varphi(f \circ g(e_j)) =  t^{\varphi}_{Ae_j}(f \circ g). $$

We know that every projective module is isomorphic to a direct sum of PIMs, so we extend $t^{\varphi}$ to $\text{Proj}_A$ by the following formula:
$$ t_{\bigoplus_{l} A_{l}}(f) = \sum_{l}t_{Ae_{l}}(i_{l} \circ f \circ p_{l}) $$
where $p_{l}$ and $i_{l}$ are the canonical injection and projection maps. By Lemma \ref{lemmeCoherence}, this defines a trace on $\Proj_A$.
We then show that $\Pi_A(t^{\varphi}) = \varphi$, proving surjectivity:
\begin{align*}
\Pi_A(t^{\varphi})(a) &= t^{\varphi}_A(\rho_a) = \sum_{j=1}^n t^{\varphi}_{Ae_j}\left(p_{j} \circ {\rho_a} \circ i_j\right) = \sum_{j=1}^n \varphi\left(p_{j} \circ {\rho_a}(e_j)\right) = \sum_{j=1}^n \varphi\left(p_{Ae_j}(e_ja)\right) \\&= \sum_{j, k = 1}^n \varphi\left(p_{Ae_j}(e_jae_k)\right) = \sum_{j=1}^n \varphi\left(e_jae_j\right) = \sum_{j=1}^n \varphi\left(ae_j\right) = \varphi(a).
\end{align*}
Note that we used that the $e_j$'s are idempotents and that $a = \sum_{j=1}^n ae_j$. We now show injectivity. Assume that $\Pi_A(t)=0$. Then:
$$\forall \, a \in A, \:\: t_A(\rho_a) = \sum_{j=1}^n t_{Ae_j}(p_{j} \circ {\rho_a} \circ i_j) = 0. $$
Let $f : Ae_j \to Ae_j$, with $f(e_{j}) = e_{j}a_fe_{j}$. Since $\rho_{f(e_{j})}(e_{l}) = \delta_{j,l}e_{j}a_fe_{j}$, we have $p_{j} \circ \rho_{f(e_{j})} \circ i_{j} = f$ and $p_{l} \circ \rho_{f(e_{j})} \circ i_{l}=0$ if $l \neq j$. Hence:
$$ t_{Ae_{j}}(f) = t_A(\rho_{f(e_{j})}) = 0. $$
Then $t_{Ae_{j}} = 0$ for each $j$, so that $t=0$.
\end{proof}

\subsection{Link with the GTA basis}
\indent We leave the general case and focus on $A=\bar U_q$. The following theorem expresses $\Pi_{\bar U_q}$ in the GTA basis.
\begin{theorem}\label{MainResult}
Let $t=(t_U)_{U \in\Proj_{\bar U_q}}$ be a trace on $\Proj_{\bar U_q}$. Then:
\[
\Pi_{\bar U_q}(t) = t_{\mathcal{X}^+(p)}(\text{\em Id})\chi^+_p + t_{\mathcal{X}^-(p)}(\text{\em Id})\chi^-_p + \sum_{s=1}^{p-1} \left(t_{\mathcal{P}^+(s)}(\text{\em Id})\chi^+_s + t_{\mathcal{P}^-(s)}(\text{\em Id})\chi^-_s + t_{\mathcal{P}^+(s)}(p^+_s)G_s\right).
\]
\end{theorem}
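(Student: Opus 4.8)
The plan is to apply Proposition~\ref{propArike}, which reduces computing $\varphi^t := \Pi_{\bar U_q}(t)$ in the GTA basis to evaluating it on the $3p-1$ elements $\Phi^{+}_{s-1}e_s$, $\Phi^{-}_{s-1}e_{p-s}$ $(1\le s\le p)$ and $w^{\pm}_{s'}$ $(1\le s'\le p-1)$. By definition $\varphi^t(a)=t_{\bar U_q}(\rho_a)$, where $\rho_a\in\End_{\bar U_q}\!\big({}_{\bar U_q}\bar U_q\big)$ is right multiplication by $a$, so everything reduces to identifying these particular $\rho_a$ as endomorphisms of the regular left module and then invoking two properties of $t$: it is additive along any direct-sum decomposition (Lemma~\ref{lemmeCoherence}, as in the proof of Theorem~\ref{corTracesSLF}), and it is invariant under isomorphisms of projective modules (immediate from the cyclicity axiom: for an isomorphism $\phi\colon U\to V$ and $h\in\End_A(U)$, apply the axiom to $f=\phi$, $g=h\circ\phi^{-1}$).

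I would first treat the central elements. Since $w^{+}_{s'}$ is central, $\rho_{w^{+}_{s'}}$ is literally the action of $w^{+}_{s'}$ on ${}_{\bar U_q}\bar U_q$; by (\ref{produitCentre}) it annihilates every block but $Q(s')$, by (\ref{actionWs}) it kills the summands of $Q(s')$ isomorphic to $\mathcal{P}^{-}(p-s')$, and by the identity $\overset{\mathcal{P}^{+}(s')}{w^{+}_{s'}}=p^{+}_{s'}$ it acts as $p^{+}_{s'}$ on each of the $s'$ summands $\cong\mathcal{P}^{+}(s')$ of $Q(s')$ (cf.~(\ref{leftBlock})). Additivity and isomorphism-invariance of $t$ then give $\varphi^t(w^{+}_{s'})=s'\,t_{\mathcal{P}^{+}(s')}(p^{+}_{s'})$, whence $\mu_{s'}=t_{\mathcal{P}^{+}(s')}(p^{+}_{s'})$ by Proposition~\ref{propArike}. (The equality $\mu_{s'}=\varphi^t(w^{-}_{s'})/(p-s')$ is automatic since $\varphi^t\in\SLF(\bar U_q)$; it moreover records that $t_{\mathcal{P}^{-}(p-s')}(p^{-}_{p-s'})=t_{\mathcal{P}^{+}(s')}(p^{+}_{s'})$.)

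Next, for $a=\Phi^{+}_{s-1}e_s$ with $1\le s\le p-1$: as $e_s$ is the unit of the block $Q(s)$ and blocks are two-sided ideals, $\rho_a$ kills $Q(t)$ for $t\ne s$ and restricts on $Q(s)$ to right multiplication by $\Phi^{+}_{s-1}$. Using the basis of $Q(s)$ in Proposition~\ref{baseBloc} --- whose left-module $\mathcal{P}^{\pm}$-summands are indexed by the second subscript of the basis vectors and whose right-module $R^{*}(\mathcal{P}^{\pm})$-summands by the first --- I would read off the right $K$-weight of each family $B,X,Y,A$ and apply Lemma~\ref{lemmePoids}(2) to conclude that right multiplication by $\Phi^{+}_{s-1}$ annihilates every left-summand $\cong\mathcal{P}^{-}(p-s)$ and every copy of $\mathcal{P}^{+}(s)$ other than the one with second index $s-1$, on which it is the identity. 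Hence $\rho_{\Phi^{+}_{s-1}e_s}$ is exactly the projection of ${}_{\bar U_q}\bar U_q$ onto a summand $\cong\mathcal{P}^{+}(s)$, so $\lambda^{+}_s=\varphi^t(\Phi^{+}_{s-1}e_s)=t_{\mathcal{P}^{+}(s)}(\mathrm{Id})$. The computation of $\lambda^{-}_s=\varphi^t(\Phi^{-}_{s-1}e_{p-s})$ is the same after relabelling $s\leftrightarrow p-s$ and using the minus part of Lemma~\ref{lemmePoids} (with $Q(p-s)\cong(p-s)\mathcal{P}^{+}(p-s)\oplus s\mathcal{P}^{-}(s)$), giving $\lambda^{-}_s=t_{\mathcal{P}^{-}(s)}(\mathrm{Id})$, and the case $s=p$ is the identical argument inside the semisimple blocks $Q(p)\cong p\mathcal{X}^{+}(p)$ and $Q(0)\cong p\mathcal{X}^{-}(p)$, yielding $\lambda^{\pm}_p=t_{\mathcal{X}^{\pm}(p)}(\mathrm{Id})$. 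Feeding all these coefficients into the GTA expansion of Proposition~\ref{propArike} gives the asserted formula.

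The main obstacle is the bookkeeping in the third step: one has to juggle the two standard-basis structures on a block $Q(s)$ furnished by Proposition~\ref{baseBloc}, track precisely the right $K$-weight of each of the eight families of basis vectors, and check via Lemma~\ref{lemmePoids} that right multiplication by $\Phi^{+}_{s-1}$ is \emph{exactly} the projection onto the single left-summand $\cong\mathcal{P}^{+}(s)$ of second index $s-1$ --- in particular that it annihilates all the $\mathcal{P}^{-}(p-s)$-summands, which is where the sign in the weights of the $x$- and $y$-vectors is used. Everything else (additivity and isomorphism-invariance of the trace, the identification $\overset{\mathcal{P}^{+}(s)}{w^{+}_{s}}=p^{+}_{s}$) is already available from the preliminaries.
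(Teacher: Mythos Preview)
Your proposal is correct and follows essentially the same route as the paper: apply Proposition~\ref{propArike}, then use the block description of Proposition~\ref{baseBloc} together with Lemma~\ref{lemmePoids}(2) to identify $\rho_{\Phi^{\pm}_{s-1}e_{\bullet}}$ as the projection onto a single PIM-summand, and use (\ref{actionWs}) to identify $\rho_{w^{+}_{s'}}$ with $p^{+}_{s'}$ on each copy of $\mathcal{P}^{+}(s')$. The only difference is presentational --- the paper names the summand-by-summand restrictions $h^{\alpha}_{s,j,a}$ explicitly, whereas you describe the global effect of $\rho_a$ directly --- but the verification is the same.
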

\begin{proof}
First of all, we write the decomposition of the left regular representation of $\bar U_q$, assigning an index to the multiple factors: 
$$ \bar U_q = \bigoplus_{s=1}^{p-1}\left(\bigoplus_{j=0}^{s-1}\mathcal{P}_j^+(s) \oplus \mathcal{P}_j^-(s)\right) \oplus \bigoplus_{j=0}^{p-1}\mathcal{X}_j^+(p) \oplus \mathcal{X}_j^-(p). $$
Thus, since $t$ is a trace: 
\begin{align*}
t_{\bar U_q}(\rho_a) = &\sum_{s=1}^{p-1}\left(\sum_{j=0}^{s-1}t_{\mathcal{P}_j^+(s)}\left(p_{\mathcal{P}_j^+(s)} \circ \rho_a \circ i_{\mathcal{P}_j^+(s)}\right) + t_{\mathcal{P}_j^-(s)}\left(p_{\mathcal{P}_j^-(s)} \circ \rho_a \circ i_{\mathcal{P}_j^-(s)}\right)\right)\\
& + \sum_{j=0}^{p-1}t_{\mathcal{X}_j^+(p)}\left(p_{\mathcal{X}_j^+(p)} \circ \rho_a \circ i_{\mathcal{X}_j^+(p)}\right) + t_{\mathcal{X}_j^-(p)}\left(p_{\mathcal{X}_j^-(p)} \circ \rho_a \circ i_{\mathcal{X}_j^-(p)}\right).
\end{align*}
Consider the following composite maps for $1 \leq s \leq p-1$ (note that the blocks appear because $\rho_a$ is the right multiplication by $a$): 
\begin{align*}
&h^+_{s,j,a} : \mathcal{P}^{+}(s) \overset{I^+_{s,j}}{\longrightarrow} \mathcal{P}^+_j(s) \overset{i_{\mathcal{P}^+_j(s)}}{\longrightarrow} Q(s) \overset{\rho_a}{\longrightarrow} Q(s) \overset{p_{\mathcal{P}_j^+(s)}}{\longrightarrow} \mathcal{P}^+_j(s)  \overset{\left(I^+_{s,j}\right)^{-1}}{\longrightarrow} \mathcal{P}^+(s),\\
&h^-_{s,j,a} : \mathcal{P}^{-}(s) \overset{I^-_{s,j}}{\longrightarrow} \mathcal{P}^-_j(s) \overset{i_{\mathcal{P}^-_j(s)}}{\longrightarrow} Q(p-s) \overset{\rho_a}{\longrightarrow} Q(p-s) \overset{p_{\mathcal{P}_j^-(s)}}{\longrightarrow} \mathcal{P}^-_j(s)  \overset{\left(I^-_{s,j}\right)^{-1}}{\longrightarrow} \mathcal{P}^-(s),
\end{align*}
where $I_{s,j}^+$ and $I^-_{s,j}$ are the isomorphisms defined by (see Proposition \ref{baseBloc}):
\begin{align*}
&I_{s,j}^+(b^+_i(s)) = B^{++}_{ij}(s),\: I_{s,j}^+(x^+_i(s))=X^{-+}_{ij}(s),\: I_{s,j}^+(y^+_i(s))=Y^{-+}_{ij}(s),\: I_{s,j}^+(a^+_i(s)) = A^{++}_{ij}(s),\\
&I_{s,j}^-(b^-_i(s)) = B^{--}_{ij}(p-s),\: I_{s,j}^-(x^-_i(s))=X^{+-}_{ij}(p-s),\: I_{s,j}^-(y^-_i(s))=Y^{+-}_{ij}(p-s),\\
& I_{s,j}^-(a^-_i(s))=A^{--}_{ij}(p-s).
\end{align*}
For $s = p$, consider:
\begin{align*}
&h^+_{p,j,a} : \mathcal{X}^{+}(p) \overset{I^+_{p,j}}{\longrightarrow} \mathcal{X}^+_j(p) \overset{i_{\mathcal{X}^+_j(p)}}{\longrightarrow} Q(p) \overset{\rho_a}{\longrightarrow} Q(p) \overset{p_{\mathcal{X}_j^+(p)}}{\longrightarrow} \mathcal{X}^+_j(p)  \overset{\left(I^+_{p,j}\right)^{-1}}{\longrightarrow} \mathcal{X}^+(p),\\
&h^-_{p,j,a} : \mathcal{X}^{-}(p) \overset{I^-_{p,j}}{\longrightarrow} \mathcal{X}^-_j(p) \overset{i_{\mathcal{X}^-_j(p)}}{\longrightarrow} Q(0) \overset{\rho_a}{\longrightarrow} Q(0) \overset{p_{\mathcal{X}_j^-(p)}}{\longrightarrow} \mathcal{X}^-_j(p)  \overset{\left(I^-_{p,j}\right)^{-1}}{\longrightarrow} \mathcal{X}^-(p)
\end{align*}
where $I^+_{p,j}$ and $I^-_{p,j}$ are the isomorphisms defined by (see Proposition \ref{baseBloc}):
$$ I^+_{p,j}(v^+_i(p)) = A^{++}_{ij}(p) \:\:\: \text{ and } \:\:\:  I^-_{p,j}(v^-_i(p)) = A^{--}_{ij}(0). $$
Then for $1 \leq s \leq p-1$:
$$t_{\mathcal{P}_j^{\alpha}(s)}\!\left(p_{\mathcal{P}_j^{\alpha}(s)} \circ \rho_a \circ i_{\mathcal{P}_j^{\alpha}(s)}\right) = t_{\mathcal{P}^{\alpha}(s)}\!\left(h^{\alpha}_{s,j,a}\right)$$
and for $s = p$:
$$t_{\mathcal{X}_j^{\alpha}(p)}\!\left(p_{\mathcal{X}_j^{\alpha}(p)} \circ \rho_a \circ i_{\mathcal{X}_j^{\alpha}(p)} \right) = t_{\mathcal{X}^{\alpha}(p)}\!\left(h^{\alpha}_{p,j,a}\right).$$

\noindent We must determine the endomorphism $h^{\alpha}_{s,j,a}$ when $a$ is replaced by the elements given in Proposition \ref{propArike}. Using (\ref{actionWs}), we get:
$$ \forall\, s' \neq s, \forall \, j, \:\: h^{\pm}_{s',j,w^+_s} = 0 \:\:\:\text{ and } \:\:\: h^{-}_{s,j,w^+_s} = 0 $$
and:
$$\forall\, j, \:\: h^{+}_{s,j,w^+_s} = p^+_s.$$
Since this does not depend on $j$ and since the block $Q(s)$ contains $s$ copies of $\mathcal{P}^+(s)$, we find that $t_{\bar U_q}(\rho_{w^+_s}) = st_{\mathcal{P}^+(s)}(p^+_s)$. So by Proposition \ref{propArike}, the coefficient of $G_s$ is $t_{\mathcal{P}^+(s)}(p^+_s)$.

Next, assume that $1 \leq s \leq p-1$, and let us compute $h^{\alpha}_{s',j,\Phi^+_{s-1}e_s}$. By (\ref{actionEs}), we see that 
$$ \forall\, s' \not\in\{s, p-s\}, \forall \, j, \:\: h^{\pm}_{s',j,\Phi^+_{s-1}e_s} = 0 \:\:\: \text{ and } \:\:\: \forall\, j, \:\: h^-_{s,j,\Phi^+_{s-1}e_s}=0,\, h^+_{p-s,j,\Phi^+_{s-1}e_s}=0. $$
Then, Proposition \ref{baseBloc} together with Lemma \ref{lemmePoids} gives:
$$ \forall \, j, \:\:  h^-_{p-s,j,\Phi^+_{s-1}e_s}=0 \:\:\:\text{ and } \:\:\:  \forall \, 0 \leq j \leq s-2, \:\: h^+_{s,j,\Phi^+_{s-1}e_s}=0  \:\:\: \text{ and } \:\:\: h^+_{s,s-1,\Phi^+_{s-1}e_s}=\text{Id}. $$
It follows that $t_{\bar U_q}\left(\rho_{\Phi^+_{s-1}e_s}\right) = t_{\mathcal{P}^+(s)}(\text{Id})$. So by Proposition \ref{propArike}, the coefficient of $\chi^+_s$ is $t_{\mathcal{P}^+(s)}(\text{Id})$.\\

\noindent We now consider $h^{\pm}_{s',j,\Phi^-_{s-1}e_{p-s}}$. This time, (\ref{actionEs}) shows that
$$ \forall\, s' \not\in\{s, p-s\}, \forall \, j, \:\: h^{\pm}_{s',j,\Phi^-_{s-1}e_{p-s}} = 0 \:\:\: \text{ and } \:\:\: \forall\, j, \:\: h^-_{p-s,j,\Phi^-_{s-1}e_{p-s}}=0,\, h^+_{s,j,\Phi^-_{s-1}e_{p-s}}=0. $$
Then, Proposition \ref{baseBloc} together with Lemma \ref{lemmePoids} gives:
$$ \forall \, j, \:\:  h^+_{p-s,j,\Phi^-_{s-1}e_{p-s}}=0 \:\:\: \text{ and } \:\:\: \forall \, 0 \leq j \leq s-2, \:\: h^-_{s,j,\Phi^-_{s-1}e_{p-s}}=0  \:\:\: \text{ and } \:\:\: h^-_{s,s-1,\Phi^-_{s-1}e_{p-s}}=\text{Id}. $$
It follows that $t_{\bar U_q}\left(\rho_{\Phi^-_{s-1}e_{p-s}}\right) = t_{\mathcal{P}^-(s)}(\text{Id})$. So by Proposition \ref{propArike}, the coefficient of $\chi^-_s$ is $t_{\mathcal{P}^-(s)}(\text{Id})$.\\
Finally, in the case where $s=p$:
$$ \forall\, s' \neq p, \forall\, j, \:\: h^{\pm}_{s', j, \Phi^+_{p-1}e_{p}} = 0 \:\:\: \text{ and } \:\:\: h^-_{p,j,\Phi^+_{p-1}e_p} = 0.$$
Then, Proposition \ref{baseBloc} together with Lemma \ref{lemmePoids} gives:
$$ \forall \, 0 \leq j \leq p-2, \:\: h^+_{p,j,\Phi^+_{p-1}e_p}=0  \:\:\:\text{ and }\:\:\: h^+_{p,p-1,\Phi^+_{p-1}e_p}=\text{Id}. $$
It follows that $t_{\bar U_q}\left(\rho_{\Phi^+_{p-1}e_p}\right) = t_{\mathcal{X}^+(p)}(\text{Id})$. So by Proposition \ref{propArike}, the coefficient of $\chi^+_p$ is $t_{\mathcal{X}^+(p)}(\text{Id})$
One similarly gets the coefficient of $\chi^-_p$.
\end{proof}

\indent By Proposition \ref{propArike}, the coefficient of $G_s$ is also given by: $\frac{1}{p-s}t_{\bar U_q}(\rho_{w^-_s})$. Taking back the notations of the proof above, we see using (\ref{actionWs}) that
$$ \forall\, s' \neq p-s, \forall \, j, \:\: h^{\pm}_{s',j,w^-_s} = 0 \:\:\: \text{ and } \:\:\: h^{+}_{p-s,j,w^-_s} = 0 $$
and:
$$\forall\, j, \:\: h^{-}_{p-s,j,w^-_s} = p^-_{p-s}.$$
Since this does not depend on $j$ and since the block $Q(s)$ contains $p-s$ copies of $\mathcal{P}^-(p-s)$, we find that $t_{\bar U_q}(\rho_{w^-_s}) = (p-s)t_{\mathcal{P}^-(p-s)}(p^-_{p-s})$. So by Proposition \ref{propArike}, the coefficient of $G_s$ is $t_{\mathcal{P}^-(p-s)}(p^-_{p-s})$. We thus have: 
\begin{equation}\label{tracePplusMoins}
t_{\mathcal{P}^-(p-s)}(p^-_{p-s}) = t_{\mathcal{P}^+(s)}(p^+_{s}).
\end{equation}
Note that there is an elementary way to see this. Indeed, the morphisms $P^+_s$ and $\bar P^-_{p-s}$ defined in (\ref{morphismesP}) satisfy:
$$ \bar P^-_{p-s} \circ P^+_s = p^+_s, \:\:\: P^+_s \circ \bar P^-_{p-s} = p^-_{p-s}. $$
Hence, we recover (\ref{tracePplusMoins}) by property of the traces. From this, we deduce the following corollary.

\begin{corollary}
Let
$$\varphi = \sum_{s=1}^{p} \left(\lambda^{+}_s\chi^{+}_s + \lambda^{-}_s\chi^{-}_s\right) + \sum_{s'=1}^{p-1}\mu_{s'} G_{s'} \in \text{\em SLF}\left(\bar U_q\right).$$
Then the trace $t^{\varphi} = \Pi_{\bar U_q}^{-1}(\varphi)$ associated to $\varphi$ is given by:
$$ t^{\varphi}_{\mathcal{X}^{\pm}(p)}(\text{\em Id}) = \lambda^{\pm}_p,\:\:\: t^{\varphi}_{\mathcal{P}^{\pm}(s)}(\text{\em Id}) = \lambda^{\pm}_s, \:\:\: t^{\varphi}_{\mathcal{P}^+(s')}(p^+_{s'})=t^{\varphi}_{\mathcal{P}^-(p-s')}(p^-_{p-s'})=\mu_{s'}. $$
\end{corollary}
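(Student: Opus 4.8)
The plan is to read this off directly from Theorem~\ref{MainResult} by inverting the isomorphism $\Pi_{\bar U_q}$. By Theorem~\ref{corTracesSLF}, $\Pi_{\bar U_q}$ is an isomorphism, so $t^{\varphi} = \Pi_{\bar U_q}^{-1}(\varphi)$ is well defined and characterized by the single identity $\Pi_{\bar U_q}(t^{\varphi}) = \varphi$.

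First I would apply Theorem~\ref{MainResult} to the particular trace $t = t^{\varphi}$. This expresses $\Pi_{\bar U_q}(t^{\varphi})$ as an explicit linear combination of GTA basis elements, namely
\[
\Pi_{\bar U_q}(t^{\varphi}) = t^{\varphi}_{\mathcal{X}^+(p)}(\text{Id})\chi^+_p + t^{\varphi}_{\mathcal{X}^-(p)}(\text{Id})\chi^-_p + \sum_{s=1}^{p-1}\left(t^{\varphi}_{\mathcal{P}^+(s)}(\text{Id})\chi^+_s + t^{\varphi}_{\mathcal{P}^-(s)}(\text{Id})\chi^-_s + t^{\varphi}_{\mathcal{P}^+(s)}(p^+_s)G_s\right).
\]
Since $\Pi_{\bar U_q}(t^{\varphi}) = \varphi$ and $\varphi$ is given in the GTA basis as $\sum_{s=1}^{p}(\lambda^{+}_s\chi^{+}_s + \lambda^{-}_s\chi^{-}_s) + \sum_{s'=1}^{p-1}\mu_{s'} G_{s'}$, I would invoke the linear independence of the GTA basis (Theorem~\ref{thAri}) to equate coefficients term by term. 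Comparing the coefficients of $\chi^{\pm}_p$ gives $t^{\varphi}_{\mathcal{X}^{\pm}(p)}(\text{Id}) = \lambda^{\pm}_p$; comparing those of $\chi^{\pm}_s$ for $1 \leq s \leq p-1$ gives $t^{\varphi}_{\mathcal{P}^{\pm}(s)}(\text{Id}) = \lambda^{\pm}_s$; and comparing those of $G_{s'}$ gives $t^{\varphi}_{\mathcal{P}^+(s')}(p^+_{s'}) = \mu_{s'}$.

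Finally, the remaining equality $t^{\varphi}_{\mathcal{P}^-(p-s')}(p^-_{p-s'}) = \mu_{s'}$ follows from the relation \eqref{tracePplusMoins}, which was established for an arbitrary trace on $\Proj_{\bar U_q}$ (either via the computation of $t_{\bar U_q}(\rho_{w^-_{s'}})$ or, more directly, from $\bar P^-_{p-s'} \circ P^+_{s'} = p^+_{s'}$ and $P^+_{s'} \circ \bar P^-_{p-s'} = p^-_{p-s'}$ together with the cyclicity axiom). There is no real obstacle here: the statement is a formal consequence of Theorem~\ref{MainResult} together with the fact that the GTA family is a basis, so the only point requiring a moment's care is confirming that Theorem~\ref{MainResult} is indeed applicable to $t^{\varphi}$, which it is since $t^{\varphi}$ is by construction an element of $\mathcal{T}_{\Proj_{\bar U_q}}$.
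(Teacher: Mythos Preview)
Your proposal is correct and matches the paper's approach: the paper introduces the corollary with ``From this, we deduce the following corollary,'' treating it as an immediate consequence of Theorem~\ref{MainResult} together with the relation~\eqref{tracePplusMoins}, which is exactly the argument you spell out.
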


\subsection{Symmetric linear form corresponding to the modified trace on $\Proj_{\bar U_q}$}\label{ModTrace}
\indent Let $H$ be a finite dimensional Hopf algebra. Let us recall that a {\em modified trace} $\mathsf{t}$ on $\Proj_H$ is a trace which satisfies the additional property that for $U \in \Proj_{H}$, for each $H$-module $V$ and for $f \in \End_{H}(U \otimes V)$  we have:
\begin{equation*}
\mathsf{t}_{U \otimes V}(f) = \mathsf{t}_U(\mathrm{tr}_R(f))
\end{equation*}
where $\mathrm{tr}_R = \mathrm{Id} \otimes \mathrm{tr}_q$ is the right partial quantum trace (see \cite[(3.2.2)]{GKP}). These modified traces are actively studied, having for motivation the construction of invariants in low dimensional topology. We refer to \cite{GKP} for the general theory in a categorical framework which encapsulates the case of $\Proj_H$.
\medskip\\
\indent In \cite{BBGe}, it is shown that there exists a unique up to scalar modified trace $\mathsf{t} = (\mathsf{t}_U)$ on $\Proj_{\bar U_q}$. Uniqueness comes from the fact that $\mathcal{X}^+(p)$ is both a simple and a projective module. The values of this trace are given by:
\begin{equation*}
\begin{array}{lll}
\mathsf{t}_{\mathcal{X}^+(p)}(\text{Id}) = (-1)^{p-1}, & \mathsf{t}_{\mathcal{X}^-(p)}(\text{Id}) = 1, & \mathsf{t}_{\mathcal{P}^+(s)}(\text{Id}) = (-1)^s(q^s+q^{-s}),\\
 \mathsf{t}_{\mathcal{P}^-(s)}(\text{Id}) = (-1)^{p-s-1}(q^s+q^{-s}), & \mathsf{t}_{\mathcal{P}^+(s)}(p^+_s) = (-1)^s[s]^2 & \mathsf{t}_{\mathcal{P}^-(s)}(p^-_s) = \mathsf{t}_{\mathcal{P}^+(p-s)}(p^+_{p-s}).
\end{array}
\end{equation*}
\indent Let $H$ be a finite dimensional unimodular pivotal Hopf algebra with pivotal element $g$ and let $\mu \in H^*$ be a right co-integral on $H$, which means that
\begin{equation*}
\forall \, x \in H, \:\: (\mu \otimes \text{Id})(\Delta(x)) = \mu(x)1.
\end{equation*}
From \cite{radford}, we know that $\mu(g \cdot)$ is a symmetric linear form. In the recent paper \cite{BBG}, it is shown that modified traces on $\Proj_H$ are unique up to scalar, and that the corresponding symmetric linear forms are scalar multiples of $\mu(g\cdot)$. Here, we show how Theorem \ref{MainResult} and computations made in \cite{GT} (see also \cite{arike}) and \cite{FGST} quickly allow us to recover this result in the case of $H = \bar U_q$. First, recall that right integrals $\mu_{\zeta}$ of $\bar U_q$ are given by:
\begin{equation*}
\mu_{\zeta}(F^mE^nK^j) = \zeta \delta_{m, p-1}\delta_{n,p-1}\delta_{j,p+1},
\end{equation*}
where $\zeta$ is an arbitrary scalar. Hence:
\begin{equation*}
\mu_{\zeta}(K^{p+1} F^mE^nK^j) = \zeta \delta_{m, p-1}\delta_{n,p-1}\delta_{j,0}.
\end{equation*}
Using formulas given in \cite{GT} (see also \cite{arike}\footnote{In notations of \cite{arike}, we have $e_s = \sum_{t=1}^se^+(s,t) + \sum_{u=1}^{p-s}e^-(p-s,u)$.}), we have ($1 \leq s \leq p-1$):
\begin{align*}
e_0 &= \frac{(-1)^{p-1}}{2p[p-1]!^2}\sum_{t=0}^{p-1}\sum_{l=0}^{2p-1}q^{-(-2t-1)l}F^{p-1}E^{p-1}K^l + (\text{terms of lower degree in } E \text{ and } F),\\
e_s &= \alpha_s\sum_{t=0}^{p-1}\sum_{l=0}^{2p-1}q^{-(s-2t-1)l}F^{p-1}E^{p-1}K^l + (\text{terms of lower degree in } E \text{ and } F),\\
e_p &= \frac{1}{2p[p-1]!^2}\sum_{t=0}^{p-1}\sum_{l=0}^{2p-1}q^{-(p-2t-1)l}F^{p-1}E^{p-1}K^l + (\text{terms of lower degree in } E \text{ and } F),
\end{align*}
where $\alpha_s$ is given in the last page of \cite{arike} as:
$$ \alpha_s = -\frac{(-1)^{p-s-1}}{2p[p-s-1]!^2[s-1]!^2}\left(\sum_{l=1}^{s-1}\frac{1}{[l][s-l]} - \sum_{l=1}^{p-s-1}\frac{1}{[l][p-s-l]}\right). $$
In order to simplify this, it is observed in \cite[Proof of Proposition 2]{murakami}, that
$$ \sum_{l=1}^{s-1}\frac{1}{[l][s-l]} - \sum_{l=1}^{p-s-1}\frac{1}{[l][p-s-l]} = \frac{-(q^s + q^{-s})}{[s]^{2}}. $$
So, since:
$$ [p-s-1]!^2[s-1]!^2 = \frac{[p-1]!^2}{[s]^2}, $$
we get:
\begin{equation*}
\alpha_s = \frac{(-1)^{p-s-1}}{2p[p-1]!^2}(q^s+q^{-s}).
\end{equation*}
Using formulas given in \cite{FGST} (see also \cite[Prop. II.3.19]{ibanez}), we have:
\begin{align*}
w_s^+ &= \frac{(-1)^{p-s-1}}{2p[p-1]!^2}[s]^2sF^{p-1}E^{p-1} + (\text{other monomials}),\\
w_s^- &= \frac{(-1)^{p-s-1}}{2p[p-1]!^2}[s]^2 (p-s)F^{p-1}E^{p-1} + (\text{other monomials}).
\end{align*}
We now use Proposition \ref{propArike} to get the coefficients of $\mu_{\zeta}(K^{p+1}\cdot)$ in the GTA basis. For instance:
\begin{align*}
\frac{\mu_{\zeta}(K^{p+1}w^+_s)}{s} &= \zeta \frac{(-1)^{p-s-1}}{2p[p-1]!^2}[s]^2,\\
\mu_{\zeta}(K^{p+1}\Phi^+_{s-1}e_s) &= \frac{\alpha_s}{2p}\mu_{\zeta}\left(K^{p+1}F^{p-1}E^{p-1}\sum_{t=0}^{p-1}\sum_{l,j=0}^{2p-1} q^{-(s-1)(l+j)+2tl}K^{l+j}\right)\\
& = \zeta\frac{\alpha_s}{2p}\sum_{t=0}^{p-1}\sum_{l=0}^{2p-1} q^{2tl} = \zeta\alpha_s.
\end{align*}
Choose the normalization factor to be $\zeta = (-1)^{p-1}2p[p-1]!^2$, and let $\mu$ be the so-normalized integral. Then:
\begin{align*}
\mu(K^{p+1}\cdot) = (-1)^{p-1}\chi^+_p + \chi^-_p + \sum_{s=1}^{p-1} & \left( (-1)^s(q^s+q^{-s})\chi^+_s + (-1)^{p-s-1}(q^s+q^{-s})\chi^-_s \right. \\
& \left. + \: (-1)^s[s]^2G_s \right).
\end{align*}
By Theorem \ref{MainResult}, we recover $\Pi_{\bar U_q}(\mathsf{t}) = \mu(K^{p+1}\cdot)$.

\section{Multiplication rules in the GTA basis}\label{multiplication}
\indent We mentioned in section \ref{sectionSLF} that $\SLF(\bar U_q)$ is a commutative algebra. In this section, we address the problem of the decomposition in the GTA basis of the product of two elements in this basis. The resulting formulas are surprisingly simple.
\smallskip\\
\indent Let us start by recalling some facts. For every $\bar U_q$-module $V$, we define the character of $V$ as (see (\ref{defT}) for the definition of $T$):
$$ \chi^V = \text{tr}(\overset{V}{T}). $$
This splits on extensions:
\begin{equation*}
0 \rightarrow V \rightarrow M \rightarrow W \rightarrow 0 \:\: \implies \:\: \chi^M = \chi^V + \chi^W.
\end{equation*}
Due to the fact that $\bar U_q$ is finite dimensional, every finite dimensional $\bar U_q$-module has a composition series (\textit{i.e.} is constructed by successive extensions by simple modules). It follows that every $\chi^V$ can be written as a linear combination of the $\chi^{\alpha}_s = \chi^{\mathcal{X}^{\alpha}(s)}$. Moreover, we see by definition of the product on $\bar U_q^*$ that
\begin{equation}\label{Ttens}
\overset{V \otimes W}{T} = \overset{V}{T}_1 \overset{W}{T}_2
\end{equation}
where $\overset{V}{T}_1 = \overset{V}{T} \otimes I_{\dim(W)}$ and $\overset{W}{T}_2 = I_{\dim(V)} \otimes \overset{W}{T}$. Thus $\chi^{V \otimes W} = \chi^V\chi^W$. Hence multiplying two $\chi$'s is equivalent to tensoring two simples modules and finding the decomposition into simple factors. This means that
\begin{equation*}
\vect(\chi^{\alpha}_s)_{\alpha \in \{\pm\}, 1 \leq s \leq p} \overset{\sim}{\rightarrow} \mathfrak{G}(\bar U_q) \otimes_{\mathbb{Z}} \mathbb{C}, \:\:\: \chi^I \mapsto [I]
\end{equation*}
where $\mathfrak{G}(\bar U_q)$ is the Grothendieck ring of $\bar U_q$. By \cite{FGST}, we know the structure of $\mathfrak{G}(\bar U_q)$. Recall the decomposition formulas (with $2 \leq s \leq p-1$):
\begin{equation*}
\mathcal{X}^-(1) \otimes \mathcal{X}^{\alpha}(s) \cong \mathcal{X}^{-\alpha}(s), \:\:\: \mathcal{X}^+(2) \otimes \mathcal{X}^{\alpha}(s) \cong \mathcal{X}^{\alpha}(s-1) \oplus \mathcal{X}^{\alpha}(s+1), \:\:\: \mathcal{X}^+(2) \otimes \mathcal{X}^{\alpha}(p) \cong \mathcal{P}^{\alpha}(p-1) 
\end{equation*}
so that
\begin{equation}\label{produitChi}
\chi^-_1\chi^{\alpha}_s = \chi^{-\alpha}_s, \:\:\: \chi^+_2 \chi^{\alpha}_s = \chi^{\alpha}_{s-1} + \chi^{\alpha}_{s+1}, \:\:\: \chi^{+}_2\chi^{\alpha}_p = 2\chi^{\alpha}_{p-1} + 2\chi^{-\alpha}_1.
\end{equation}
We see in particular that $\chi^+_2$ generates the subalgebra $\vect(\chi^{\alpha}_s)_{\alpha \in \{\pm\}, 1 \leq s \leq p}$. The $\chi^{\alpha}_s$ are expressed as Chebyschev polynomials of $\chi^+_2$, see \cite[section 3.3]{FGST} for details.

\begin{theorem}\label{ProduitArike}
The multiplication rules in the GTA basis are entirely determined by (\ref{produitChi}) and by the following formulas:
\begin{align}
&\chi^+_2G_1 = [2]G_2, \label{ChiG1}\\
&\chi^+_2G_s = \frac{[s-1]}{[s]}G_{s-1} + \frac{[s+1]}{[s]}G_{s+1} \:\text{ for } 2 \leq s \leq p-2,\label{ChiG}\\
&\chi^+_2G_{p-1} = [2]G_{p-2}, \label{ChiGpMoins1}\\
&\chi^-_1G_s = -G_{p-s} \:\text{ for all } s,\label{ChiMoinsGs}\\
&G_sG_t = 0 \:\text{ for all } s, t.\label{GG}
\end{align}
\end{theorem}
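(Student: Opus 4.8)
The plan is to combine two facts. First, by Proposition~\ref{propArike} a symmetric linear form is completely determined by its values on the $3p-1$ elements $\Phi^{+}_{s-1}e_s$, $\Phi^{-}_{s-1}e_{p-s}$ ($1\le s\le p$) and $w^{+}_{s}$ ($1\le s\le p-1$). Second, the product on $\mathrm{SLF}(\bar U_q)$ is $\varphi\psi(x)=\sum_{(x)}\varphi(x')\psi(x'')$. Since (\ref{produitChi}) already describes the subalgebra $\vect(\chi^{\alpha}_s)$, and since $\chi^{+}_2$ and $\chi^{-}_1$ generate it, the only new products to identify are $\chi^{+}_2G_s$, $\chi^{-}_1G_s$ and $G_sG_t$; every other product then follows by iteration, and one checks at the end that (\ref{produitChi})--(\ref{GG}) indeed pin down the whole multiplication table. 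I also note that (\ref{ChiG1}) and (\ref{ChiGpMoins1}) are just the $s=1$ and $s=p-1$ specialisations of (\ref{ChiG}) under the conventions $G_0=G_p=0$, $[0]=0$, $[p-n]=[n]$, so really only the one family (\ref{ChiG}) and the formulas (\ref{ChiMoinsGs}), (\ref{GG}) have to be established.

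The elementary lemma I would prove first is that multiplication by any $G_t$ lands in $\vect(G_1,\dots,G_{p-1})$; equivalently, the $\chi^{\pm}_u$-coefficients of $\chi^{+}_2G_s$, $\chi^{-}_1G_s$ and $G_sG_t$ all vanish. By Proposition~\ref{propArike} these coefficients are values of the products on the elements $\Phi^{\pm}_{u-1}e_{\bullet}$, and the point is that every $G_t$ annihilates all elements of $\bar U_q$ acting diagonally in a standard basis on $\mathcal{P}^{\pm}$---in particular $G_t(K^{l}e_v)=0$ and $G_t(\Phi^{\pm}_n e_v)=0$, because such an element has zero $(a,b)$-block in the block form of $\overset{\mathcal{P}^{\alpha}(s)}{T}$. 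Plugging $\Delta(\Phi^{+}_{u-1})=\tfrac1{2p}\sum_l q^{-(u-1)l}K^{l}\otimes K^{l}$ and $\Delta(e_v)$ into the product, using that $\chi^{+}_2$ is supported on the PBW monomials $K^{l}$ and $EFK^{l}$ and that $\chi^{-}_1$ is the algebra map $E,F\mapsto0$, $K\mapsto-1$, one checks by $\mathrm{ad}_K$-weight bookkeeping that the surviving terms always pair a $G_t$ against such a diagonal element. Alternatively one can argue through the Drinfeld isomorphism (\ref{morphismeDrinfeld}): the images $\mathcal{D}(G_t)$ lie in the radical $\vect(w^{\pm}_s)$ of $\mathcal{Z}(\bar U_q)$, whose square is $0$ by (\ref{produitCentre}), which at once gives $G_sG_t=0$ and the absence of scalar parts. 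This lemma is the step I expect to be the main obstacle, since $\Delta(e_v)$ and $\Delta(w^{+}_u)$ are not explicit; the way around it is that one only needs their images modulo the common kernel of the $G$'s, or modulo passing to the center.

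With the lemma in place only a few scalars remain. For $\chi^{-}_1G_s$: $\chi^{-}_1$ is an involution in the algebra $\bar U_q^{*}$ ($\chi^{-}_1\chi^{-}_1=\varepsilon$), so $\chi^{-}_1\varphi=\varphi\circ\sigma$ where $\sigma(x)=(\chi^{-}_1\otimes\mathrm{id})\Delta(x)$ is the algebra automorphism $E\mapsto E$, $F\mapsto-F$, $K\mapsto-K$; twisting a module by $\sigma$ negates its $K$-weights, hence sends $\mathcal{P}^{+}(s)$ to $\mathcal{P}^{-}(s)$ and $\mathcal{P}^{-}(p-s)$ to $\mathcal{P}^{+}(p-s)$, so $G_s\circ\sigma=\pm G_{p-s}$, and the sign is fixed to $-1$ by evaluating on $EF$ (using $\sigma(EF)=-EF$ and a direct computation of $G_s(EF)$ from (\ref{BaseProjectif})). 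For $\chi^{+}_2G_s$: write it as $\sum_u c_uG_u$ with $c_u=\tfrac1u(\chi^{+}_2G_s)(w^{+}_u)$ and compute the right-hand side from the leading term $w^{+}_u=\tfrac{(-1)^{p-u-1}}{2p[p-1]!^2}[u]^2u\,F^{p-1}E^{p-1}+(\text{lower order in }E,F)$, the coproduct formula (\ref{coproduitMonome}), the support of $\chi^{+}_2$, and the action (\ref{BaseProjectif}) of $E^{n}F^{n}K^{l}$ on $\mathcal{P}^{+}(u)$; the sum collapses and produces $[s-1]/[s]$ and $[s+1]/[s]$ (equivalently, $(\chi^{+}_2G_s)(x)$ is a sum of two twisted traces $\mathrm{tr}\big(\overset{V}{x}\,(\mathrm{Id}\otimes N)\big)$ over $V=\mathcal{X}^{+}(2)\otimes\mathcal{P}^{+}(s)$ and $V=\mathcal{X}^{+}(2)\otimes\mathcal{P}^{-}(p-s)$ for a suitable non-module map $N$, and the coefficients are read off from the decomposition of these $V$ into indecomposable projectives). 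Finally $G_sG_t=0$ follows from the lemma: it suffices that $(G_sG_t)(w^{+}_u)=0$ for all $u$, which is the center argument above, or a radical-filtration estimate showing no tensorand of $\Delta(w^{+}_u)$ can pair a $G_s$-nonzero leg with a $G_t$-nonzero leg.
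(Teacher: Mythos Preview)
Your overall strategy differs from the paper's, and the step you flag as ``the main obstacle'' is indeed where the real gap lies. Your claimed lemma---that $\chi^{+}_2 G_s$ (and $\chi^{-}_1 G_s$) have no $\chi^{\pm}_u$-components---is not established by either of your two suggested routes. The direct route needs control of $\Delta(e_v)$, which you rightly note is unavailable. The Drinfeld route shows only that $\mathcal{D}(\chi^{+}_2 G_s)$ lies in the ideal $\vect(w^{\pm}_r)$ (since $\mathcal{D}(G_s)$ does), but $\vect(w^{\pm}_r)$ has dimension $2(p-1)$ while $\vect(G_r)$ has dimension $p-1$; there is no reason for $\mathcal{D}^{-1}\bigl(\vect(w^{\pm}_r)\bigr)$ to equal $\vect(G_r)$, and in fact it is strictly larger. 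So from $\mathcal{D}(\chi^{+}_2 G_s)\in\vect(w^{\pm}_r)$ you cannot conclude the $\chi$-coefficients vanish. The same dimension count shows your Drinfeld argument does give $G_sG_t=0$ (this part coincides with the paper's), but nothing more.

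The paper sidesteps your lemma entirely. It uses the fusion rules $\mathcal{X}^{+}(2)\otimes\mathcal{P}^{\alpha}(s)\cong\mathcal{P}^{\alpha}(s-1)\oplus\mathcal{P}^{\alpha}(s+1)$, together with evaluation on $1$, $K$, $K^2$, to show (Lemma~\ref{LemmeProdChiGs}) that $\chi^{+}_2 G_s$ lies a priori in a $3$-dimensional affine space, with unknowns $\beta_s,\gamma_s$ and a single $\chi$-coefficient $\lambda_s$ multiplying a fixed combination $\chi^{+}_{s-1}+\chi^{-}_{p-s+1}-\chi^{+}_{s+1}-\chi^{-}_{p-s-1}$. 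It then evaluates on the three elements $FE$, $(FE)^2$ and $E^{p-1}F^{p-1}$, whose coproducts are handled directly via (\ref{coproduitMonome}) and the Casimir, obtaining a $3\times 3$ linear system with nonzero determinant whose unique solution has $\lambda_s=0$. So the vanishing of the $\chi$-part is an \emph{output} of the computation, not an input; your proposed lemma is simply never needed.

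Your plan to read off the $G_u$-coefficients of $\chi^{+}_2 G_s$ from $(\chi^{+}_2 G_s)(w^{+}_u)$ using only the leading PBW term of $w^{+}_u$ is also unjustified: after applying $\Delta$ and pairing one leg with $\chi^{+}_2$, the lower-order terms of $w^{+}_u$ are not killed by $G_s$ in any obvious way, and you give no argument that ``the sum collapses''. The paper instead evaluates on $E^{p-1}F^{p-1}$, which acts as an explicit rank-one map on each $\mathcal{P}^{\alpha}(s)$ (see (\ref{actionP})), making the computation genuinely tractable. Your treatment of $\chi^{-}_1 G_s$ via the sign automorphism $\sigma\colon E\mapsto E,\ F\mapsto -F,\ K\mapsto -K$ is correct in spirit and is a cleaner alternative to the paper's method (which folds this case into the same evaluation scheme as above); it only needs the routine check that $\mathcal{P}^{+}(s)^{\sigma}\cong\mathcal{P}^{-}(s)$ with the correct normalisation to fix the sign.
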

Before giving the proof, let us deduce a few consequences.
\begin{corollary}\label{coroVermaIdeal} For all $1 \leq s \leq p-1$ we have:
$$ G_s = \frac{1}{[s]}\chi^+_sG_1, \:\:\:\: \chi^+_pG_1 = 0. $$
It follows that $(\chi^+_s + \chi^-_{p-s})G_t = 0$, and that $\mathcal{V} = \vect(\chi^+_s + \chi^-_{p-s}, \chi^+_p, \chi^-_p)_{1 \leq s \leq p-1}$ is an ideal of $\SLF(\bar U_q)$.
\end{corollary}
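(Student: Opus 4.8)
The plan is to derive Corollary~\ref{coroVermaIdeal} purely formally from the multiplication rules of Theorem~\ref{ProduitArike} together with the Chebyshev-type structure recorded in (\ref{produitChi}). First I would establish the identity $G_s = \frac{1}{[s]}\chi^+_s G_1$ by induction on $s$. The base case $s=1$ is trivial since $[1]=1$, and the case $s=2$ is exactly (\ref{ChiG1}) rewritten as $G_2 = \frac{1}{[2]}\chi^+_2 G_1$, once one knows $\chi^+_2 = \chi^+_2$ (note $\chi^+_1=\varepsilon$ so $\chi^+_2 G_1$ is literally $\chi^+_2 G_1$). For the inductive step, I would multiply the recursion $\chi^+_2\chi^+_s = \chi^+_{s-1}+\chi^+_{s+1}$ from (\ref{produitChi}) by $G_1$; applying the inductive hypothesis at $s-1$ and $s$ gives $\chi^+_2\chi^+_s G_1 = [s-1]G_{s-1} + [s]\,\chi^+_2 G_s$, while applying it at $s+1$ gives $\chi^+_{s-1}G_1 + \chi^+_{s+1}G_1 = [s-1]G_{s-1} + [s+1]G_{s+1}$. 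Comparing and using (\ref{ChiG}), $\chi^+_2 G_s = \frac{[s-1]}{[s]}G_{s-1} + \frac{[s+1]}{[s]}G_{s+1}$, the terms $[s-1]G_{s-1}$ cancel and one is left with $[s+1]G_{s+1} = [s]\cdot\frac{[s+1]}{[s]}G_{s+1}$, which is consistent; the cleaner route is to solve directly: $\chi^+_{s+1}G_1 = \chi^+_2\chi^+_s G_1 - \chi^+_{s-1}G_1 = [s]\,\chi^+_2 G_s - [s-1]G_{s-1} = [s]\bigl(\frac{[s-1]}{[s]}G_{s-1}+\frac{[s+1]}{[s]}G_{s+1}\bigr) - [s-1]G_{s-1} = [s+1]G_{s+1}$, which is the claim at $s+1$. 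One must run this for $2\leq s\leq p-2$ so that (\ref{ChiG}) applies, and separately check that the step reaching $s=p-1$ is consistent with (\ref{ChiGpMoins1}); this is where I expect the only real bookkeeping.

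Next, for $\chi^+_p G_1 = 0$: from the identity just proved, $\chi^+_{p-1}G_1 = [p-1]G_{p-1}$, and multiplying $\chi^+_2\chi^+_{p-1} = \chi^+_{p-2} + \chi^+_p$ (the $s=p-1$ instance of (\ref{produitChi}), valid since $2\le p-1$) by $G_1$ gives $\chi^+_p G_1 = \chi^+_2\chi^+_{p-1}G_1 - \chi^+_{p-2}G_1 = [p-1]\,\chi^+_2 G_{p-1} - [p-2]G_{p-2}$. By (\ref{ChiGpMoins1}) this is $[p-1][2]G_{p-2} - [p-2]G_{p-2} = ([p-1][2]-[p-2])G_{p-2}$, and $[p-1][2] = [p-1][2] = [p] + [p-2] = [p-2]$ since $[p]=0$ (using the standard $q$-integer identity $[a][2]=[a+1]+[a-1]$). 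Hence the coefficient vanishes and $\chi^+_p G_1 = 0$.

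For the remaining assertions: $(\chi^+_s + \chi^-_{p-s})G_t = 0$ follows by first reducing to $t=1$ via $G_t = \frac{1}{[t]}\chi^+_t G_1$ (so it suffices to kill $(\chi^+_s+\chi^-_{p-s})\chi^+_t G_1$, and $\chi^+_s\chi^+_t$ is a $\mathbb{Z}$-linear combination of $\chi^+_r$'s by (\ref{produitChi})), and then observing $\chi^-_{p-s}G_1 = \chi^-_1\chi^+_{p-s}\,\text{?}$—more directly, $\chi^-_{p-s} = \chi^-_1\chi^+_{p-s}$ from the first formula in (\ref{produitChi}), so $\chi^-_{p-s}G_1 = \chi^+_{p-s}(\chi^-_1 G_1) = \chi^+_{p-s}(-G_{p-1})$ by (\ref{ChiMoinsGs}); combining with $\chi^+_{p-s}G_{p-1} = 0$, which holds because $G_{p-1}$ generates an ideal annihilated by $\chi^+_p$ and $\chi^+_{p-s}G_{p-1} = \frac{[p-1]}{[?]}\cdots$—here I would instead argue uniformly: from $\chi^+_p G_1 = 0$ and (\ref{produitChi}), every $\chi^+_r$ with $r\geq 1$ acts on $G_1$ through the quotient of $\mathrm{vect}(\chi^+_s)$ by the ideal generated by $\chi^+_p$, in which $\chi^+_s + \chi^-_{p-s} \equiv 0$ by the fusion rules of $\mathfrak{G}(\bar U_q)$ modulo that relation; I would make this precise by checking $\chi^+_s + \chi^-_{p-s}$ maps to zero under multiplication by any power of $\chi^+_2$ applied to $G_1$, using downward induction from $s=p$. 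Finally, $\mathcal{V}$ is spanned by the $\chi^+_s+\chi^-_{p-s}$ together with $\chi^+_p,\chi^-_p$; it is closed under multiplication by $\chi^+_2$ and $\chi^-_1$ (which generate the subalgebra $\mathrm{vect}(\chi^\alpha_s)$) by direct inspection of (\ref{produitChi})—e.g. $\chi^+_2(\chi^+_s+\chi^-_{p-s}) = (\chi^+_{s-1}+\chi^-_{p-(s-1)}) + (\chi^+_{s+1}+\chi^-_{p-(s+1)})$ with the appropriate boundary modifications at $s=1$ and $s=p-1$ producing multiples of $\chi^-_p$ and $\chi^+_p$—and it annihilates every $G_t$ by the part already proved, so it absorbs products with $G_t$ as well; hence $\mathcal{V}\cdot\SLF(\bar U_q)\subseteq\mathcal{V}$.

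\textbf{Main obstacle.} The genuinely delicate point is verifying that the inductive identity $\chi^+_{s+1}G_1 = [s+1]G_{s+1}$ remains consistent as $s$ approaches the boundary value $p-1$, i.e. reconciling the generic recursion (\ref{ChiG}) with the special endpoint rules (\ref{ChiG1}) and (\ref{ChiGpMoins1}); this forces a careful separate treatment of the cases $s=1$, $s=p-2$, $s=p-1$ rather than a single clean induction, and the cancellations rely repeatedly on $[p]=0$ and $[p-n]=[n]$. Everything else is formal manipulation inside the commutative algebra $\SLF(\bar U_q)$ using only the stated relations.
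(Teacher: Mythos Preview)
Your inductive derivation of $G_s=\frac{1}{[s]}\chi^+_sG_1$ and of $\chi^+_pG_1=0$ is exactly the paper's approach and is carried out correctly, including the boundary check via $[2][p-1]=[p]+[p-2]=[p-2]$.

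Where you diverge from the paper is in the argument for $(\chi^+_s+\chi^-_{p-s})G_t=0$. Your first attempt contains a false claim: $\chi^+_{p-s}G_{p-1}=0$ is simply wrong (take $s=p-1$, it gives $G_{p-1}\neq 0$). You rightly abandon it, but the quotient-ring argument you fall back on, while valid in principle, is heavier than needed and left vague. The paper's route is a one-line computation that you just missed by applying $\chi^-_1$ on the wrong side: using commutativity and the formula already proved,
\[
\chi^-_{p-s}G_1=\chi^-_1\bigl(\chi^+_{p-s}G_1\bigr)=\chi^-_1\bigl([p-s]G_{p-s}\bigr)=[s]\,\chi^-_1G_{p-s}=-[s]G_s
\]
by (\ref{ChiMoinsGs}) and $[p-s]=[s]$, so $(\chi^+_s+\chi^-_{p-s})G_1=[s]G_s-[s]G_s=0$; then multiply by $\frac{\chi^+_t}{[t]}$ to get the statement for general $G_t$. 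This replaces your entire quotient discussion. The ideal claim for $\mathcal{V}$ is then handled as you say: stability under $\chi^+_2$ (hence under all $\chi^\alpha_r$) is a direct check from (\ref{produitChi}), and $\mathcal{V}\cdot G_t=0$ gives absorption of the $G_t$'s.
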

\begin{proof}[Proof of Corollary \ref{coroVermaIdeal}]
The formulas for $\chi^+_sG_1$ are proved by induction using $\chi^+_{s+1}= \chi^+_s\chi^+_2 - \chi^+_{s-1}$ together with formula (\ref{ChiG}). We deduce:
$$ (\chi^+_s + \chi^-_{p-s})G_t = \frac{\chi^+_t}{[t]}(\chi^+_sG_1 + \chi^-_{p-s}G_1) = \frac{\chi^+_t}{[t]}([s]G_s + [s]\chi^-_1G_{p-s}) = 0 .$$
It is straightforward that $\mathcal{V}$ is stable by multiplication by $\chi^+_2$, so it is an ideal.
\end{proof}
\begin{remark}
We have $\chi^{\mathcal{P}^{\alpha}(s)} = 2\left(\chi^{\alpha}_s + \chi^{-\alpha}_{p-s}\right)$ for $1 \leq s \leq p-1$. Thus $\mathcal{V}$ is generated by characters of the projective modules. It is well-known that if $H$ is a finite dimensional Hopf algebra, then the full subcategory of finite dimensional projective $H$-modules is a tensor ideal. Thus we can deduce without any computation that $\mathcal{V}$ is stable under the multiplication by every $\chi^{I}$.
\end{remark}

\indent We now proceed with the proof of the theorem. Observe that we cannot apply Proposition \ref{propArike} to show it since we do not know expressions of $\Delta(e_s)$ and $\Delta(w^{\pm}_s)$ which are easy to evaluate in the GTA basis. Recall (\cite{KS}, see also \cite{ibanez}) the following fusion rules:
\begin{align}
&\mathcal{X}^-(1) \otimes \mathcal{P}^{\alpha}(s) \cong \mathcal{P}^{-\alpha}(s)  \:\:\: \text{ for all } s, \label{chi1MoinsTensPs}\\
&\mathcal{X}^+(2) \otimes \mathcal{P}^{\alpha}(1) \cong 2\mathcal{X}^{-\alpha}(p) \oplus \mathcal{P}^{\alpha}(2),\label{chi2TensP1}\\
&\mathcal{X}^+(2) \otimes \mathcal{P}^{\alpha}(s) \cong \mathcal{P}^{\alpha}(s-1) \oplus \mathcal{P}^{\alpha}(s+1) \:\:\: \text{ for } 2 \leq s \leq p-1,\label{chi2TensPs}\\
&\mathcal{X}^+(2) \otimes \mathcal{P}^{\alpha}(p-1) \cong 2\mathcal{X}^{\alpha}(p) \oplus \mathcal{P}^{\alpha}(p-2)\label{chi2TensPpMoins1}.
\end{align}
They imply the following key lemma.
\begin{lemma}\label{LemmeProdChiGs}
There exist scalars $\gamma_s, \beta_s, \lambda_s, \eta_s, \delta_s$ such that
\begin{align*}
&\chi^+_2G_s = \beta_s G_{s-1} + \gamma_s G_{s+1} + \lambda_s\!\left( \chi^+_{s-1} + \chi^-_{p-s+1} - \chi^+_{s+1} - \chi^-_{p-s-1} \right) \:\:\:\: (\text{for } 2 \leq s \leq p-2),\\
&\chi^+_2G_1 = \gamma_1 G_{2} + \lambda_1\!\left(\chi^-_p - \chi^+_2 - \chi^-_{p-2} \right), \:\:\:\: \chi^+_2G_{p-1} = \beta_{p-1} G_{p-2} + \lambda_{p-1}\!\left( \chi^+_{p-2} + \chi^-_{2} - \chi^+_p \right),\\
&\chi^-_1G_s = \eta_s G_{p-s} + \delta_s\!\left( \chi^+_{p-s} + \chi^-_s \right).
\end{align*}
\end{lemma}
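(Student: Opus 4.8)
The plan is to extract the structure of $\chi^+_2 G_s$ and $\chi^-_1 G_s$ directly from the tensor product decompositions (\ref{chi1MoinsTensPs})--(\ref{chi2TensPpMoins1}), using the block structure of the matrix $\overset{\mathcal{P}^{\alpha}(s)}{T}$ that was displayed when $G_s$ was defined. Recall $G_s = \mathrm{tr}(H^+_s) + \mathrm{tr}(H^-_{p-s})$, where $H^{\alpha}_s$ is the lower-left corner block of $\overset{\mathcal{P}^{\alpha}(s)}{T}$, i.e.\ the ``off-diagonal'' part recording how the top copy of $\mathcal{X}^{\alpha}(s)$ maps into the socle copy. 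So first I would observe that $\mathrm{tr}(H^{\alpha}_s)$ can be characterized intrinsically: it is $\varphi \mapsto$ (the value on the identity endomorphism of the canonical nilpotent map $p^{\alpha}_s \in \End_{\bar U_q}(\mathcal{P}^{\alpha}(s))$ composed appropriately), or more concretely, writing $\mathcal{P}^{\alpha}(s)$ with its four-layer structure, $G_s$ picks out the ``trace through the socle'' part of $T$. The point is that $G_s$ is the symmetric-linear-form component attached to the nontrivial extension datum of $\mathcal{P}^{\alpha}(s)$, and it is \emph{not} a character (characters split on extensions, and $G_s$ does not).

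Next, the key structural input: for a short exact sequence of $\bar U_q$-modules $0 \to V \to M \to W \to 0$, characters add, so the ``$G$-part'' of $\chi^M$ only sees genuinely indecomposable non-semisimple projective summands. Concretely, I would argue that for any finite dimensional $\bar U_q$-module $M$, decomposing $M$ into indecomposables, one gets $\chi^M = (\text{combination of }\chi^{\alpha}_t) + \sum (\text{coefficient}) \cdot G_t$, where each $\mathcal{P}^{\alpha}(t)$-summand of $M$ contributes exactly one $G_t$ (via $\mathcal{P}^+(t) \leftrightarrow G_t$ and $\mathcal{P}^-(t) \leftrightarrow G_{p-t}$, matching the definition $G_s = \mathrm{tr}(H^+_s)+\mathrm{tr}(H^-_{p-s})$), while simple summands and any summand that is semisimple-projective (the $\mathcal{X}^{\pm}(p)$) contribute only $\chi$'s. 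Granting this, I apply it to $M = \mathcal{X}^+(2) \otimes \mathcal{P}^{\alpha}(s)$. Since $\chi^{\mathcal{X}^+(2)} = \chi^+_2$ and $\chi^{\mathcal{P}^{\alpha}(s)}$ involves $G_s$ (for $\alpha = +$) resp.\ $G_{p-s}$ (for $\alpha = -$), and since $\chi$ is multiplicative, the products $\chi^+_2 \cdot (\text{the }G\text{-containing character})$ decompose according to the fusion rules. From (\ref{chi2TensPs}) the only non-semisimple-projective summands on the right are $\mathcal{P}^{\alpha}(s-1)$ and $\mathcal{P}^{\alpha}(s+1)$, so the $G$-part of $\chi^+_2 G_s$ is of the form $\beta_s G_{s-1} + \gamma_s G_{s+1}$; the remaining $\chi$-part is some combination of $\chi^{\alpha}_t$'s. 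Comparing with $\chi^+_2 \cdot \chi^{\mathcal{P}^{\alpha}(s)}$ expanded both via $\chi^{\mathcal{P}^{\alpha}(s)} = 2(\chi^{\alpha}_s + \chi^{-\alpha}_{p-s}) + (\text{the }G_s\text{ correction})$ pins down that the $\chi$-part is forced to be a multiple of $\chi^+_{s-1} + \chi^-_{p-s+1} - \chi^+_{s+1} - \chi^-_{p-s-1}$ — here one uses that the $\chi$-contribution must be the same whether computed through the $\alpha=+$ or $\alpha=-$ version of $\mathcal{P}^{\alpha}(s)$, and the known product $\chi^+_2 \cdot 2(\chi^{\alpha}_s+\chi^{-\alpha}_{p-s}) = 2(\chi^{\alpha}_{s-1}+\chi^{\alpha}_{s+1}) + 2(\chi^{-\alpha}_{p-s-1}+\chi^{-\alpha}_{p-s+1})$ from (\ref{produitChi}). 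The boundary cases $s=1$ (use (\ref{chi2TensP1}), where $2\mathcal{X}^{-\alpha}(p)$ appears, contributing only $\chi^{-\alpha}_p$) and $s=p-1$ (use (\ref{chi2TensPpMoins1})) are handled the same way, and the statement for $\chi^-_1 G_s$ follows from (\ref{chi1MoinsTensPs}): $\mathcal{X}^-(1) \otimes \mathcal{P}^{\alpha}(s) \cong \mathcal{P}^{-\alpha}(s)$, whose character's $G$-part is $G_{p-s}$ (if $\alpha=+$) or $G_s$ (if $\alpha=-$), giving $\chi^-_1 G_s = \eta_s G_{p-s} + \delta_s(\chi^+_{p-s}+\chi^-_s)$ after matching with $\chi^-_1 \cdot 2(\chi^{\alpha}_s+\chi^{-\alpha}_{p-s}) = 2(\chi^{-\alpha}_s + \chi^{\alpha}_{p-s})$.

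The main obstacle is making rigorous the claim that ``the $G$-component of a character is additive over direct sums and picks up exactly one $G_t$ per non-semisimple projective indecomposable summand.'' This requires showing that $G_t$ is linearly independent from all the $\chi$'s (which we have, since the GTA basis is a basis by Theorem \ref{thAri}) \emph{and} that the decomposition of an arbitrary $\chi^M$ in the GTA basis has its $G$-coefficients determined solely by the multiplicities of $\mathcal{P}^{+}(t)$ and $\mathcal{P}^-(t)$ in $M$. This I would establish by the block-matrix form of $\overset{M}{T}$: choosing a basis of $M$ adapted to a decomposition into indecomposables, $\overset{M}{T}$ is block-diagonal, each block being $\overset{\mathcal{X}^{\alpha}(t)}{T}$ or $\overset{\mathcal{P}^{\alpha}(t)}{T}$, so $\chi^M = \mathrm{tr}(\overset{M}{T})$ is literally the sum of the corresponding characters $\chi^{\alpha}_t$ and $\chi^{\mathcal{P}^{\alpha}(t)} = 2(\chi^{\alpha}_t + \chi^{-\alpha}_{p-t}) + G$-term; then by uniqueness of coordinates in the GTA basis, and by Proposition \ref{propArike} applied with the central element $w^+_t$ (which detects precisely $\mathrm{tr}(H^+_t)$), the $G_t$-coefficient of $\chi^M$ equals the multiplicity of $\mathcal{P}^+(t)$ plus the multiplicity of $\mathcal{P}^-(p-t)$ in $M$. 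Everything else is bookkeeping with the fusion rules (\ref{chi1MoinsTensPs})--(\ref{chi2TensPpMoins1}).
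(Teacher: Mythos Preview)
There is a genuine gap: your argument rests on the claim that the ordinary character $\chi^{\mathcal{P}^{\alpha}(s)} = \mathrm{tr}\bigl(\overset{\mathcal{P}^{\alpha}(s)}{T}\bigr)$ ``involves $G_s$'', and more generally that $\chi^M$ decomposed in the GTA basis has nonzero $G_t$-coefficients recording the multiplicities of projective summands. This is false. The trace of the block matrix $\overset{\mathcal{P}^{\alpha}(s)}{T}$ only sees the diagonal blocks, so $\chi^{\mathcal{P}^{\alpha}(s)} = 2\chi^{\alpha}_s + 2\chi^{-\alpha}_{p-s}$ exactly, with no $G$-term; indeed, by Proposition~\ref{propArike} the $G_t$-coefficient of any $\varphi\in\SLF(\bar U_q)$ is $\varphi(w^+_t)/t$, and $\chi^M(w^+_t) = \mathrm{tr}\bigl(\overset{M}{w^+_t}\bigr) = 0$ for every $M$ because $w^+_t$ acts nilpotently. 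Consequently $\chi^+_2 G_s$ is \emph{not} the character of $\mathcal{X}^+(2)\otimes\mathcal{P}^{\alpha}(s)$ (nor of any module), and the fusion rules cannot be applied to it in the way you propose.

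The paper's proof uses the fusion rules differently: one observes that $G_s$ lies in the span of the \emph{matrix coefficients} $\overset{\mathcal{P}^+(s)}{T}_{\!kl}$ and $\overset{\mathcal{P}^-(p-s)}{T}_{\!kl}$ (not just the trace), so by \eqref{Ttens} the product $\chi^+_2 G_s$ lies in the span of the matrix coefficients of $\mathcal{X}^+(2)\otimes\mathcal{P}^+(s)$ and $\mathcal{X}^+(2)\otimes\mathcal{P}^-(p-s)$; the fusion rules then identify this span, and symmetry forces $\chi^+_2 G_s = \beta_s G_{s-1} + \gamma_s G_{s+1} + z_1\chi^+_{s-1} + z_2\chi^+_{s+1} + z_3\chi^-_{p-s+1} + z_4\chi^-_{p-s-1}$ with four \emph{a priori} independent $\chi$-coefficients. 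The reduction to a single parameter $\lambda_s$ (with $z_1=z_3=-z_2=-z_4$) is a separate step, obtained by evaluating both sides on $1$, $K$, $K^2$ and solving the resulting linear system --- an argument your plan does not address.
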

\begin{proof}
Let us fix $2 \leq s \leq p-2$; by (\ref{defChis}), (\ref{defGs}), (\ref{Ttens}) and (\ref{chi2TensPs}) we have:
\begin{align*}
\chi^+_2G_s &\in \vect\!\left(\overset{\mathcal{X}^{+}(2)}{T_{ij}}\cdot\overset{\mathcal{P}^{+}(s)}{T_{kl}}, \overset{\mathcal{X}^{+}(2)}{T_{ij}}\cdot\overset{\mathcal{P}^{-}(p-s)}{T_{kl}}\right)_{ijkl} = \vect\!\left(\overset{\mathcal{X}^{+}(2) \otimes \mathcal{P}^+(s)}{T_{ijkl}}, \overset{\mathcal{X}^{+}(2) \otimes \mathcal{P}^-(p-s)}{T_{ijkl}}\right)_{ijkl}\\
&= \vect\!\left(\overset{\mathcal{P}^+(s-1)}{T_{ij}}, \overset{\mathcal{P}^+(s+1)}{T_{ij}}, \overset{\mathcal{P}^-(p-s+1)}{T_{ij}}, \overset{\mathcal{P}^-(p-s-1)}{T_{ij}}\right)_{ij}
\end{align*}
where $\overset{V}{T}\!_{ij}$ is the matrix element at the $i$-th row and $j$-th column of the representation matrix $\overset{V}{T}$ and $\overset{V \otimes W}{T}\!\!\!\!_{ijkl}$ is the matrix element at the $(i,j)$-th row and $(k,l)$-th column of the representation matrix $\overset{V \otimes W}{T}$. Hence, since $\chi^+_2G_s$ is symmetric, it is necessarily of the form 
$$
\chi^+_2G_s = \beta_s G_{s-1} + \gamma_s G_{s+1} + z_1 \chi^+_{s-1} + z_2 \chi^+_{s+1} + z_3 \chi^-_{p-s+1} + z_4 \chi^-_{p-s-1}.
$$
Evaluating this equality on $K$ and $K^2$, we find (since $G_t(K^{l}) = 0$ for all $t$ and $l$):
\[ [s-1](z_1 - z_3) + [s+1](z_2 - z_4) = 0, \:\:\:\: [s-1]_{q^2}(z_1 - z_3) + [s+1]_{q^2}(z_2 - z_4) = 0, \]
with $[n]_{q^2} = \frac{q^{2n} - q^{-2n}}{q^2 - q^{-2}}$. The determinant of this linear system with unknowns $z_1 - z_3, z_2 - z_4$ is $\frac{2\sin( (s-1)\pi/p ) \sin( (s+1)\pi/p )}{\sin( \pi/p )\sin( 2\pi/p )}\left( \cos( (s+1)\pi/p ) - \cos( (s-1)\pi/p ) \right) \neq 0$.  Hence $z_1 = z_3$, $z_2 = z_4$. Moreover, evaluating the above equality on $1$, we find $p(z_1 + z_2) = 0$. Letting $\lambda_s = z_1$, the result follows. The other formulas are obtained in a similar way using (\ref{chi1MoinsTensPs}), (\ref{chi2TensP1}) and (\ref{chi2TensPpMoins1}).
\end{proof}

\indent We will use the Casimir element $C$ of $\bar U_q$ to make computations easier. It is defined by:
\[ C = FE + \frac{qK + q^{-1}K^{-1}}{(q - q^{-1})^2} = \sum_{j=0}^p c_j e_j + \sum_{k = 1}^{p-1} (w^+_k + w^-_k) \in \mathcal{Z}(\bar U_q)\]
where $c_j = \frac{q^j + q^{-j}}{ (q - q^{-1})^2 }$. The second equality is obtained by considering the action of $C$ on the PIMs $\mathcal{P}^{\alpha}(s)$. Observe that
\begin{equation}\label{precCasimir}
\forall \, x \in \bar U_q, \:\:\: \chi^{\alpha}_s(Cx) = \alpha c_s \chi^{\alpha}_s(x), \:\:\: G_s(Cx) = c_sG_s(x) + (\chi^+_s + \chi^-_{p-s})(x). 
\end{equation}
Then by induction we get $G_s(C^n) = npc_s^{n-1}$ for $n \geq 1$. We will also denote $c_K = \frac{qK + q^{-1}K^{-1}}{(q - q^{-1})^2}$.

\begin{proof}[Proof of Theorem \ref{ProduitArike}]
~\\
\indent \textbullet \hspace{1pt} \textit{Formula \eqref{ChiG}.} \hspace{2pt} We first evaluate the corresponding formula of Lemma \ref{LemmeProdChiGs} on $FE$. It holds $G_t(FE) = G_t(C) = p$, $(\chi^+_t + \chi^-_{p-t})(FE) = (\chi^+_t + \chi^-_{p-t})(C) = p c_t$ for all $t$ and $\chi^+_2G_s(FE) = \chi^+_2(K^{-1})G_s(FE) = [2]p$. Thus we get:
\begin{equation}\label{eqLin1}
\beta_s + \gamma_s  + (c_{s-1} - c_{s+1}) \lambda_s = \beta_s + \gamma_s  - [s] \lambda_s = [2].
\end{equation}
Next, we evaluate the formula of Lemma \ref{LemmeProdChiGs} on $(FE)^2$. On the one hand,
\begin{align*}
(\chi^+_2 G_s)\!\left( (FE)^2 \right) &= \chi^+_2(K^{-2}) G_s\!\left( (FE)^2 \right) = \chi^+_2(K^{-2}) G_s\!\left( C^2 - 2 C c_K + c_K^2 \right)\\
& = \chi^+_2(K^{-2}) G_s(C^2) = 2p(q^2 + q^{-2})c_s.
\end{align*}
For the first equality, we used that $\varphi(E^iF^jK^{l}) = \delta_{i,j}\varphi(E^iF^iK^{l})$ for all $\varphi \in \mathrm{SLF}(\bar U_q)$, that $G_s(K^{l}) = 0$ and that $G_s(FEK^{l}) = 0$ for $1 \leq l \leq p-1$. The third equality is due to \eqref{precCasimir} and to the fact that $(\chi^+_s + \chi^-_{p-s})(K^{l}) = 0$ for $1 \leq l \leq p-1$. 
On the other hand, using again the Casimir element,
\begin{align*}
&\beta_s G_{s-1}\!\left( (FE)^2 \right) + \gamma_s G_{s+1}\!\left( (FE)^2 \right) + \lambda_s\!\left( \chi^+_{s-1} + \chi^-_{p-s+1} - \chi^+_{s+1} - \chi^-_{p-s-1} \right)\!\left( (FE)^2 \right)\\
&= \beta_s G_{s-1}\!\left( C^2 \right) + \gamma_s G_{s+1}\!\left( C^2 \right) + \lambda_s\!\left( \chi^+_{s-1} + \chi^-_{p-s+1} - \chi^+_{s+1} - \chi^-_{p-s-1} \right)\!\left( C^2 \right)\\
&= 2pc_{s-1} \beta_s + 2pc_{s+1} \gamma_s + p(c_{s-1}^2 - c_{s+1}^2) \lambda_s.
\end{align*}
Since $c_{s-1}^2 - c_{s+1}^2 = -(q + q^{-1})c_s[s]$, we get
\begin{equation}\label{eqLin2}
2c_{s-1} \beta_s + 2c_{s+1} \gamma_s -(q + q^{-1})c_s[s]\lambda_s = 2(q^2 + q^{-2})c_s.
\end{equation}
In order to get a third linear equation between $\beta_s$, $\gamma_s$ and $\lambda_s$, we use evaluation on $E^{p-1}F^{p-1}$. This has the advantage to annihilate all the $\chi^{\alpha}_t$ appearing in the formula of Lemma \ref{LemmeProdChiGs}. First:
\begin{equation}\label{actionP}
\begin{split}
E^{p-1}F^{p-1}b_0^{\alpha}(s) &= E^{p-1}y^{\alpha}_{p-s-1}(s) = (-\alpha)^{p-s-1}[p-s-1]!^2E^sy_0^{\alpha}(s)\\
&= (-\alpha)^{p-s-1}\alpha^{s-1}[p-s-1]!^2[s-1]!^2a_0^{\alpha}(s) \\
&= (-\alpha)^{p-s-1}\alpha^{s-1}\frac{[p-1]!^2}{[s]^2}a_0^{\alpha}(s)
\end{split}
\end{equation}
and $E^{p-1}F^{p-1}$ annihilates all the other basis vectors. Hence:
$$
G_s(E^{p-1}F^{p-1}) = 2(-1)^{p-s-1}\frac{[p-1]!^2}{[s]^2}.
$$
Next by (\ref{coproduitMonome}), we have:
$$
\chi^+_2 \otimes \text{Id}\left(\Delta(E^{p-1}F^{p-1})\right) = -[2]E^{p-1}F^{p-1} - q^2E^{p-2}F^{p-2}K.
$$
As in (\ref{actionP}), we find:
\begin{align*}
E^{p-2}F^{p-2}Kb_0^{\alpha}(s) &= (-\alpha)^{p-s}\alpha^sq^{s-1}\frac{[p-1]!^2}{[s+1][s]^2}a_0^{\alpha}(s),\\
E^{p-2}F^{p-2}Kb_1^{\alpha}(s) &= (-\alpha)^{p-s-1}\alpha^{s-1}q^{s-3}\frac{[p-1]!^2}{[s-1][s]^2} a_1^{\alpha}(s)
\end{align*}
and all the others basis vectors are annihilated. Hence:
$$
G_s(E^{p-2}F^{p-2}K) = 2(-1)^{p-s-1}\frac{[p-1]!^2}{[s]^2}\frac{q^{-2}[2]}{[s-1][s+1]}.
$$
We obtain:
$$ \chi^+_2 \otimes G_s\left(\Delta(E^{p-1}F^{p-1})\right) = 2(-1)^{p-s}[p-1]!^2\frac{[2]}{[s-1][s+1]} $$
and thus:
\begin{equation}\label{eqLin3}
\frac{\beta_s}{[s-1]^2} +  \frac{\gamma_s}{[s+1]^2} = \frac{[2]}{[s-1][s+1]}.
\end{equation}
As a result, we have a linear system \eqref{eqLin1}--\eqref{eqLin2}--\eqref{eqLin3} between $\beta_s$, $\gamma_s$ and $\lambda_s$. It is easy to check that $\beta_s = \frac{[s-1]}{[s]}, \gamma_s = \frac{[s+1]}{[s]}, \lambda_s = 0$ is a solution. Moreover this solution is unique. Indeed, a straightforward computation reveals that
\[
\det
\left(
\begin{array}{ccc}
1 & 1 & -[s]\\
2 c_{s-1} & 2c_{s+1} & -(q + q^{-1})c_s[s]\\
\frac{1}{[s-1]^2} & \frac{1}{[s+1]^2} & 0
\end{array}
\right)
= \frac{[s]^2}{[s-1]^2} + \frac{[s]^2}{[s+1]^2} > 0.
\]

\indent \textbullet \hspace{1pt} \textit{Formulas \eqref{ChiG1} and \eqref{ChiMoinsGs}.} \hspace{2pt} Evaluating as above the corresponding formulas of Lemma \ref{LemmeProdChiGs} on $FE$ and $(FE)^2$, one gets linear systems with non-zero determinants.  It is then easy to see that $\beta_1 = [2], \lambda_1 = 0$ and $\eta_s = -1, \delta_s = 0$ are the unique solutions of each of these two systems.
\medskip\\
\indent \textbullet \hspace{1pt} \textit{Formula \eqref{ChiGpMoins1}.} \hspace{2pt} It can be deduced from the formulas already shown:
\[ \chi^+_2 G_{p-1} = -\chi^+_2 \chi^-_1 G_1 = -\chi^-_1 [2] G_2 = [2] G_{p-2}. \]
\indent \textbullet \hspace{1pt} \textit{Formula \eqref{GG}.} \hspace{2pt} Recall the isomorphism of algebras $\mathcal{D}$ defined in (\ref{morphismeDrinfeld}). Taking into account that $\varphi(K^iF^mE^n)=0$ if $n \neq m$ for any $\varphi \in \SLF(\bar U_q)$ and that $G_s(K^i) = 0$ for all $i$, and making use of the expression of $RR'$ given in \cite{FGST}, we get:
\begin{align*}
\mathcal{D}(G_s) &= \sum_{n=0}^{p-1}\sum_{j=0}^{2p-1}\left(\sum_{i=0}^{2p-1} \frac{(q-q^{-1})^n}{[n]!^2}q^{n(j-i-1)-ij}G_s(K^{p+i+1}E^nF^n)\right)K^jF^nE^n\\
&= \sum_{n=1}^{p-1}\sum_{j=0}^{2p-1}\lambda_{j,n}K^jF^nE^n
\end{align*}
for some coefficients $\lambda_{j,n}$ (observe that $n \geq 1$). From this it follows that for all $\alpha \in \{\pm\}$ and $1 \leq r \leq p-1$: $ \mathcal{D}(G_s)b_0^{\alpha}(r) \in \mathbb{C}a_0^{\alpha}(r)$. By \eqref{actionEs}, we deduce that $\mathcal{D}(G_s) \in \vect(w^{\pm}_r)_{1 \leq r \leq p-1}$ for all $s$. Thus $\mathcal{D}(G_sG_t) = 0$, thanks to \eqref{produitCentre}.
\end{proof}

\vspace{3em}
\hfill\begin{minipage}{.60\linewidth}
\noindent IMAG, Univ Montpellier, CNRS, Montpellier, France.
\\E-mail address: matthieu.faitg@umontpellier.fr
\end{minipage}

\end{document}